%
%
%
%
%
%
%
\documentclass{amsproc}

\usepackage{amsmath}
\usepackage{amssymb}
\usepackage{rotating}
\usepackage{graphicx}
\usepackage[tableposition=below]{caption}
\usepackage{bm}

\DeclareGraphicsExtensions{.png,.pdf,.eps}
\usepackage[all,2cell,cmtip]{xy}
\usepackage{tikz}
\usepackage{xcolor}
\usepackage{longtable}
\usepackage{soul}
\setlength\LTleft{-0.2cm}
\setlength\LTright{-0.2cm}

\newtheorem{theorem}{Theorem}[section]
\newtheorem{lemma}[theorem]{Lemma}

\newtheorem{proposition}[theorem]{Proposition}

\newtheorem{claim}[theorem]{Claim}
\theoremstyle{definition}

\newtheorem{definition}[theorem]{Definition}

\newtheorem{remark}[theorem]{Remark}


\def\P{{\mathbb P}}

\def\R{{\mathbb R}}
\def\Z{{\mathbb Z}}

\def\cE{{\mathcal E}}
\def\cF{{\mathcal F}}

\def\cL{{\mathcal L}}

\def\cO{{\mathcal{O}}}

\def\ra{\rightarrow}

\def\tensor{\otimes}

\def\cOperatorname#1{\mathop{\rm #1}\nolimits}

\def\NE{{\cOperatorname{NE}}}
\def\ME{{\cOperatorname{ME}}}

\newcommand{\cNE}{\overline{\NE}}
\newcommand{\cME}[1]{\cOverline{\ME}}

\makeindex

\begin{document}

\title{Weak Fano bundles of rank $2$ over hyperquadrics $Q^n$ of dimension $n \ge 5$}

\author{Yuta Takahashi}
\date{\today}
\address{Department of Mathematics, Faculty of Science and Engineering, Chuo University.
1-13-27 Kasuga, Bunkyo-ku, Tokyo 112-8551, Japan}
\email{yuta0630takahashi0302@gmail.com}
\subjclass[2010]{14J40 14J45, 14J60, 14J70.}
\keywords{Weak Fano bundle, Quadric hypersurfaces, Rank $2$ vector bundle}
\maketitle
\begin{abstract} 
A vector bundle whose projectivization becomes a weak Fano variety is called a weak Fano bundle. We present classification results for rank 2 weak Fano bundles on higher-dimensional quadrics $Q^n$ of dimension $\ge 5$. 
\end{abstract}
\tableofcontents
\section{Introduction}In this paper, we study weak Fano bundles over a smooth quadric hypersurface $Q^n$ defined over an algebraically closed field $k$ of characteristic zero. 
A vector bundle $\cE$ on a smooth projective variety $X$ is a {\it Fano bundle} if the projectivization of $\cE$ is a Fano variety. Szurek and Wi{\'s}niewski showed that the base space $X$ of a Fano bundle $\cE$ must be a smooth Fano variety \cite[Theorem~1.6]{SW90}. They also gave a classification of rank $2$ Fano bundles on $\P^3$ and studied  rank $2$ Fano bundles on $Q^3$ \cite[Theorem~2.1 and Section~3]{SW90}. Subsequently, rank 2 Fano bundles on $\P^n$ and $Q^n$ in arbitrary dimensions are classified in \cite[Main Theorem]{APW94}. 

After that, Sols, Szurek and Wi{\'s}niewski completed the classification of rank $2$ Fano bundles on $Q^3$ in \cite{SSW91}. More generally, Mu{\~n}oz, Occhetta and Sol{\'a}~Conde completely classified rank $2$ Fano bundles on smooth Fano varieties $X$ with $H^2(X,\Z)\simeq H^4(X,\Z)\simeq \Z$ in \cite{MOS12, MOS14, MOS142}.  

On the other hand, Langer introduced a {\it weak Fano bundle} as a natural generalization of Fano bundles in \cite{L98}. A vector bundle $\cE$ on a smooth projective variety $X$ is a {\it weak Fano bundle} if the projectivization of $\cE$ is a weak Fano variety.  Until now rank $2$ weak Fano bundles were classified when the base space is the projective space $\P^n$ \cite{Y12} or a cubic hypersurface in $\P^4$, i.e., a del Pezzo 3-fold of degree three \cite{I16} or a del Pezzo 3-fold $X$ of deg$X$ $\in \{1,2,4,5\}$ \cite{FHI22, FHI23}. In addition, weak Fano bundles of arbitrary rank $r$ whose first Chern class is odd over $\P^2$ are classified, see \cite{L98, O20}. 

As opposed to Fano bundles, rank $2$ weak Fano bundles on quadric hypersurfaces $Q^n$ where $n\ge 3$ have not yet been classified. Our main purpose of this paper is to prove the following theorem.
\begin{theorem}\label{thm:main2}
Let $\cE$ be a weak Fano bundle of rank $2$ over $Q^n$ where $n\ge 5$. Then $\cE$ is a direct sum of line bundles or the Cayley bundle on $Q^5$ up to twist with a line bundle (see Definition~\ref{def:cayley}).
\end{theorem}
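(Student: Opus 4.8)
The plan is to study $X:=\P(\cE)$, which has Picard number two with $\Pic(X)=\Z\xi\oplus\Z\,\pi^*H$, where $\pi\colon X\to Q^n$ is the projection, $\xi=c_1(\cO_X(1))$ and $H=\cO_{Q^n}(1)$. Since $-K_{Q^n}=nH$ and $K_{X/Q^n}=-2\xi+\pi^*c_1(\cE)$, a twist lets me normalize $c_1(\cE)=c_1H$ with $c_1\in\{0,-1\}$, and then
\[
-K_X=2\xi+(n-c_1)\,\pi^*H.
\]
Because $\rho(X)=2$, the cone $\cNE(X)$ is spanned by two rays: the class $f$ of a line in a fibre of $\pi$, with $-K_X\cdot f=2$, and a second ray. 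I would realize the second ray by a minimal section $C$ over a line $\ell\subset Q^n$: if $\cE|_\ell=\cO(a)\oplus\cO(b)$ with $a\ge b$ and maximal gap $e_{\max}=\max_\ell(a-b)$, then the lowest such section has $\xi\cdot C=(c_1-e_{\max})/2$ and $\pi^*H\cdot C=1$, so $-K_X\cdot C=n-e_{\max}$. As lines are the minimal rational curves of $Q^n$, this identifies the extremal ray, and nefness of $-K_X$ becomes the single inequality $e_{\max}\le n$; the boundary case $e_{\max}=n$ is precisely the crepant ray that distinguishes the weak Fano problem from the Fano classification of \cite{APW94}.

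With nefness reduced to $e_{\max}\le n$, I would next split into the unstable and semistable cases. If $\cE$ is not semistable, a maximal destabilizing line subbundle gives an exact sequence $\shse{\cO(a)}{\cE}{\cI_Z(c_1-a)}$ with $2a>c_1$ and $Z\subset Q^n$ of codimension $\ge 2$. A general line then has gap $2a-c_1\ge 1$, while lines meeting $Z$ jump strictly higher; bounding this jump by $n$, together with bigness $(-K_X)^{n+1}>0$, I expect to force $Z=\varnothing$ and $a=c_1-a$, so that the extension splits. The splitting itself uses $\Ext^1(\cO(c_1-a),\cO(a))=H^1(Q^n,\cO(2a-c_1))=0$, valid for all $n\ge3$ since $Q^n$ carries no intermediate line-bundle cohomology. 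Thus every unstable weak Fano bundle is a direct sum of line bundles, and conversely $\cO(a)\oplus\cO(b)$ is weak Fano exactly when $|a-b|\le n$, with the boundary checked by bigness.

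It remains to treat semistable $\cE$, where I would combine three inputs. Restriction theorems of Grauert--M\"{u}lich type control the generic splitting type on lines; the Bogomolov inequality, applied after restriction to a smooth surface section $Q^2\subset Q^n$, gives $4c_2\ge c_1^2$ and hence a lower bound on $c_2$; and the bigness condition $(-K_X)^{n+1}>0$, expanded via $\pi_*\xi^{k}=s_{k-1}(\cE)$ and $\int_{Q^n}H^n=2$ into an explicit polynomial inequality in $(c_1,c_2)$, bounds $c_2$ from above. Together these leave only finitely many pairs $(c_1,c_2)$, and the remaining boundary pairs are discarded because bigness is a strict inequality. For each surviving pair I would appeal to the classification of stable rank $2$ bundles of small $c_2$ on quadrics: on $Q^5$ the only such stable bundle is the Cayley bundle of Definition~\ref{def:cayley}, whereas for $n\ge6$ I would show the window is empty---restricting a hypothetical stable example to a general $Q^5\subset Q^n$ would yield the Cayley bundle, which does not extend to $Q^n$---so that no non-split semistable example occurs.

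I expect the semistable step to be the \emph{main obstacle}. The delicate points are the sharp upper bound on $c_2$ coming from $(-K_X)^{n+1}>0$ (rather than from the weaker generic-splitting estimate), the exclusion of the boundary Chern classes by strict bigness rather than mere nefness, and the rigidity statement that the Cayley bundle admits no rank $2$ analogue on $Q^n$ for $n\ge6$. By contrast, the unstable case and the reduction of nefness to $e_{\max}\le n$ should be essentially formal once the second extremal ray of $\cNE(X)$ has been correctly identified with a minimal section over a line.
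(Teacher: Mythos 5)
There is a genuine gap, and it starts at the very first reduction. You claim that nefness of $-K_X=2\xi_{\cE_0}$ is equivalent to the single inequality $e_{\max}\le n$ on splitting types of lines, "identifying" the second extremal ray of $\cNE(\P(\cE))$ with a minimal section over a line. Nefness of a vector bundle is not detected by its restrictions to lines: the curves you must control include multisections over curves of arbitrary degree, and the cone theorem gives you no grip on curves with $K_X\cdot C>0$, which is precisely the region you need to exclude; that lines are the minimal rational curves of $Q^n$ says nothing about which curves upstairs span the second ray. The paper never uses such a criterion. Its engine is instead cohomological: from the weak Fano hypothesis it proves (via Kleiman) that a suitable twist $\cE(k+1)$ is \emph{ample}, then combines Le Potier vanishing for $i\ge 2$ with Kawamata--Viehweg vanishing for $i=1$ (using directly that $-K_X$ is nef and big) to get $(n-1)$-regularity, hence that $\cE(n-1)$ is globally generated; the unstable case then splits by the criterion \cite[Corollary~4.3]{APW94} after a Chern-class estimate. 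Your treatment of the unstable case ("bounding this jump by $n$, together with bigness, I expect to force $Z=\varnothing$") is a hope rather than an argument: the jump of the splitting type on a line through a point of $Z$ can be as small as $1$ no matter how large $\deg Z$ is, so the line bound plus bigness does not visibly kill $Z$.

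The semistable case has a second gap: the tools you name cannot close the finite check. For $n\ge 12$, bigness alone yields no contradiction; the paper needs the quantitative bound $c_2(\cF)>\frac{71}{4\sin^2(\pi/(n-1))}$ for a globally generated bundle with a smooth codimension-two zero locus (\cite[Proposition~6.2]{APW94}) played against $c_2(\cF)\le\frac{4}{3}(n-1)^2$. For $5\le n\le 11$, after the nef-bundle inequalities $c_1^2\ge\alpha c_2$ force $c_2(\cE)\le 2$ (or $\le 3$ when $n=6$), the surviving pairs $(c_1,c_2)=(0,1),(0,2),(-1,2)$ are \emph{not} discarded by strict bigness but by Riemann--Roch integrality on $Q^5$: $\chi(\cE|_{Q^5})\notin\Z$, so no rank-$2$ bundle with those Chern classes exists at all; the $c_2=3$ cases on $Q^6$ die because $s_6$ of the nef twist is negative, contradicting nefness. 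There is no classification of "stable rank $2$ bundles of small $c_2$ on quadrics" that you can cite to cover these cases. Only your final step matches the paper: for $(c_1,c_2)=(-1,1)$, Ottaviani's theorem identifies a stable example restricted to $Q^5$ as the Cayley bundle, which does not extend to $Q^6$ --- though even there the stability of the restriction $\cE|_{Q^5}$ needs an ad hoc argument, since Mehta--Ramanathan-type theorems concern general hypersurface sections of large degree, not linear sections.
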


To prove this theorem, we follow the idea of \cite{APW94}. The approach to classify Fano bundles of rank $2$ on $Q^n$ in \cite{APW94} is based on two key ingredients. The first ingredient is to show that $\cE(n-1)$ is globally generated for a normalized Fano bundle $\cE$ of rank $2$ on $Q^n$ of $n\ge 5$. In this paper, we prove that an analogous result also holds for a normalized weak Fano bundle of rank $2$. The second ingredient of \cite{APW94} is to establish splitting criterions for vector bundles on $Q^n$. We also use these criterions to prove Theorem~\ref{thm:main2}. Although the basic idea of our proof of Theorem~\ref{thm:main2} is similar to that of \cite{APW94}, the calculations are more complicated and our case requires more discussion than \cite{APW94}.

\subsection*{Notations}Throughout this part, $k$ is an algebraically closed field of characteristic zero. We use standard notations as in \cite{H77, L041, L042, OSS80}. Let $X$ be a smooth projective variety and $\cE$ be a rank $2$ vector bundle over $X$. 
\begin{itemize}
\item A Cartier divisor $D$ on $X$ is {\it nef} if for all irreducible curves $C\subset X$, $D\cdot C\ge 0$.
\item A vector bundle $\cE$ over $X$ is called {\it nef} if the tautological line bundle  $\cO_{\P(\cE)}(1)$ is nef.
\item We say that $X$ is {\it Fano} if $-K_X$ is ample. 
\item We say that $X$ is {\it weak Fano} if $-K_X$ is nef and big.  
\item A vector bundle $\cE$ is {\it stable (resp. semistable)} if for every invertible subsheaf $\cL$ of $\cE$, $c_1(\cL)<\frac{1}{2}c_1(\cE)$ (resp. $c_1(\cL)\le \frac{1}{2}c_1(\cE)$).
\item A vector bundle $\cE$ {\it splits} if $\cE$ is a direct sum of line bundles. 
\item We denote by $A_k(X)=A^{n-k}(X)$ the group of rational equivalence classes of algebraic $k$-cycles on $X$. We denote by  $A(X):=\bigoplus_k A_k(X)$ the Chow ring of $X$.
\item We denote the $i$-th Chern class of $\cE$ by $c_i(\cE)$. When $A^1(X)$ and $ A^2(X)$ are isomorphic to $\Z$, there exist an effective divisor $H$ and an effective $2$-cocycle $L$ on $X$ such that $A^1(X)\cong \Z[H]$ and $A^2(X)\cong \Z[L]$; then we consider $c_1(\cE)$ and $c_2(\cE)$ as integers $c_1$ and $c_2$, that is, $c_1(\cE)=c_1H\in A^1(X)$ and $c_2(\cE)=c_2L\in A^2(X)$.
\item Assume that Pic$(X)\simeq \Z$. A vector bundle $\cE$ is {\it normalized} if $c_1=0$ or $-1$.
\item We denote the $i$-th Segre class of $\cE$ by $s_i(\cE)$. It is defined by the equation $s_i(\cE)=\pi_{*}(\xi_{\cE}^{1+i})$, where $\pi:\P(\cE)\rightarrow X$ is the natural projection and $\xi_{\cE}$ is the tautological divisor corresponding to $\cO_{\P(\cE)}(1)$. 
\item We denote by $\P^n$ and $Q^n$ projective $n$-space and a smooth quadric hypersurface in $\P^{n+1}$ respectively.
\end{itemize}
\section{Preliminaries}
\subsection{Chow ring of quadric hypersurfaces $Q^n$}

For $n\ge 3$, let $H_{Q^n} \in A^1(Q^n)$ be the class of the hyperplane section. Since $n\ge 3$, $A^1(Q^n)\simeq {\rm Pic}(Q^n)$ is generated by $H_{Q^n}$. If $n=2k+1$, then we denote $P_{Q^n}\in A^k(Q^{2k+1})$ by the generator of it.

\begin{proposition}[{\cite[Section~1]{F83}}]\label{prop:chow}
Assume that $n=2k+1$ for $k\ge 1$. Then the classes $H_{Q^n}$ and $P_{Q^n}$ satisfy the following relations:
\begin{itemize}
\item[(i)]$H_{Q^n}^{k+1}=2P_{Q^n}$,
\item[(ii)]$H^k_{Q}\cdot P_{Q^n}=1$.
\end{itemize}
\end{proposition}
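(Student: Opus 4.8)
The plan is to fix the generators $H_{Q^n}$ of $A^1(Q^n)$ and $P_{Q^n}$ of $A^{k+1}(Q^{2k+1})$, and then to reduce both identities to degree computations in the top group $A^{2k+1}(Q^{2k+1})\cong\Z$, which I identify with $\Z$ via the degree of a $0$-cycle. Here $P_{Q^n}$ is the class of a maximal linear subspace $\P^k\subset Q^{2k+1}$; it lies in codimension $k+1$ (equivalently, dimension $k$), which is precisely what makes the products in (i) and (ii) land in the expected groups. The single structural input I take for granted, from the cell decomposition of the quadric as in \cite{F83}, is that $A^{k+1}(Q^{2k+1})$ is infinite cyclic, generated by the class of such a linear subspace.

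I would prove (ii) first. Since $P_{Q^n}$ is represented by a linearly embedded $\P^k\subset Q^{2k+1}\subset\P^{2k+2}$, the restriction of the hyperplane class $H_{Q^n}$ to this $\P^k$ is its own hyperplane class $\cO_{\P^k}(1)$. Hence, by the projection formula,
\[
H_{Q^n}^k\cdot P_{Q^n}\;=\;\deg_{\P^k}\bigl(\cO_{\P^k}(1)^k\bigr)\;=\;1,
\]
which is exactly (ii). For (i), I use that $A^{k+1}(Q^{2k+1})\cong\Z\cdot P_{Q^n}$ to write $H_{Q^n}^{k+1}=m\,P_{Q^n}$ for a unique integer $m$. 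Multiplying by $H_{Q^n}^k$ and invoking (ii) gives $H_{Q^n}^{2k+1}=m\,(H_{Q^n}^k\cdot P_{Q^n})=m$. On the other hand $H_{Q^n}^{2k+1}$ is the top self-intersection of the hyperplane class, equal to $\deg Q^{2k+1}=2$ because a quadric hypersurface has degree $2$. Therefore $m=2$, proving (i).

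Essentially all of the content is concentrated in the cited structural fact that $A^{k+1}(Q^{2k+1})$ is generated by the maximal linear subspace class, which rests on the Bruhat/cell decomposition of the quadric; once the generators are fixed, the two relations are just the short degree computations above. The only point that demands genuine care is the indexing of $P_{Q^n}$ in codimension $k+1$, so that the intersection products are formed in the correct Chow groups and the numerical identities typecheck.
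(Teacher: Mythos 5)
Your argument is correct, but note that the paper itself offers no proof of this proposition: it is stated as a known fact with a citation to \cite[Section~1]{F83}, where it follows from the cell decomposition of the odd quadric. Your proposal is thus a self-contained verification of the cited fact, and it is the standard one: granting that $A_k(Q^{2k+1})$ is infinite cyclic, generated by the class of a linear $\P^k$ (exactly the structural content of the citation), relation (ii) is the projection-formula computation $H_{Q^n}^k\cdot P_{Q^n}=\deg_{\P^k}\bigl(\cO_{\P^k}(1)^k\bigr)=1$, and (i) follows by writing $H_{Q^n}^{k+1}=m\,P_{Q^n}$ and pairing with $H_{Q^n}^k$ to get $m=H_{Q^n}^{2k+1}=\deg Q^{2k+1}=2$. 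Two remarks. First, your insistence on the indexing is justified and in fact silently corrects a slip in the paper: the Notations section places $P_{Q^n}$ in $A^k(Q^{2k+1})$, but with the paper's convention $A_k(X)=A^{n-k}(X)$ the generator must lie in $A_k(Q^{2k+1})=A^{k+1}(Q^{2k+1})$, i.e.\ codimension $k+1$, or else neither (i) nor (ii) typechecks. Second, your proof uses, beyond the cyclicity of $A^{k+1}$, the identification of its generator with the class of a maximal linear subspace; this is also supplied by the cell decomposition (the closure of the $k$-dimensional cell is a linear $\P^k$, and effectivity pins down the sign), so it is legitimately part of the input you declared rather than a hidden gap.
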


\subsection{Some properties of vector  bundles}

\begin{proposition}\label{prop:normsta}
For a normalized vector bundle $\cE$ of rank $2$ over a smooth projective variety $X$ with ${\rm Pic}(X)\simeq \Z$, the following hold : \\
(1) $\cE$ is stable if and only if $\cE$ has no sections:
$$
H^0(X, \cE)=0.
$$
(2) If $c_1(\cE)$ is even, then $\cE$ is semistable if and only if 
$$
H^0(X,\cE(-1))=0.
$$
\end{proposition}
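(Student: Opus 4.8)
The plan is to reduce each equivalence to a statement about the slope of destabilizing subsheaves, exploiting the fact that $\operatorname{Pic}(X)\simeq\Z$ forces every line subbundle to be of the form $\cO_X(m)$ and every invertible subsheaf to be contained in such a line subbundle.

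\textbf{Part (1).}
For the forward direction, suppose $\cE$ is stable. If $\cE$ had a nonzero section $s\in H^0(X,\cE)$, then $s$ would define an injection $\cO_X\hookrightarrow\cE$, realizing $\cO_X$ as an invertible subsheaf $\cL$ with $c_1(\cL)=0$. Stability would then demand $0=c_1(\cL)<\tfrac12 c_1(\cE)=\tfrac12 c_1$, but a normalized bundle has $c_1\in\{0,-1\}$, so $\tfrac12 c_1\le 0$, a contradiction. Hence $H^0(X,\cE)=0$. For the converse, suppose $H^0(X,\cE)=0$ and let $\cL=\cO_X(m)$ be any invertible subsheaf; I would write $c_1(\cL)=m$ and note that twisting the inclusion $\cO_X(m)\hookrightarrow\cE$ by $\cO_X(-m)$ produces a nonzero section of $\cE(-m)$, so $H^0(X,\cE(-m))\neq 0$. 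I must show $m<\tfrac12 c_1$. Since $H^0(X,\cE)=0$, the case $m=0$ is excluded, and I would argue that a nonzero section of $\cE(-m)$ with $m\ge 1$ is impossible when $\cE$ is normalized by a Riemann--Roch or vanishing argument, or more directly by observing that $\cE(-m)$ then has a section while its determinant has first Chern class $c_1-2m\le -1$, which one compares against the normalization. The cleanest route is to rule out $m\ge 1$ directly: such a section would give $\cO_X\hookrightarrow\cE(-m)$, hence $\cO_X(m)\hookrightarrow\cE$, and since $m\ge1>\tfrac12 c_1$ this would already show $\cE$ is not even semistable, but one still needs $\cE$ to have no sections to preclude it — so the argument must use the hypothesis $H^0(X,\cE)=0$ to exclude $m\ge 0$, leaving only $m\le -1<\tfrac12 c_1$ for all actual invertible subsheaves.

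\textbf{Part (2).}
Here $c_1$ is even, so the normalization gives $c_1=0$ and the semistability threshold is exactly $\tfrac12 c_1=0$. If $\cE$ is semistable, then any invertible subsheaf $\cL$ satisfies $c_1(\cL)\le 0$; in particular a subsheaf $\cO_X(1)\hookrightarrow\cE$, which would come from a nonzero section of $\cE(-1)$, is forbidden since $c_1(\cO_X(1))=1>0$, giving $H^0(X,\cE(-1))=0$. Conversely, if $H^0(X,\cE(-1))=0$, I would show no invertible subsheaf has positive degree: an invertible subsheaf $\cO_X(m)\hookrightarrow\cE$ with $m\ge 1$ would embed $\cO_X(1)\hookrightarrow\cO_X(m)\hookrightarrow\cE$, hence yield a nonzero section of $\cE(-1)$, contradicting the hypothesis. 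Thus every invertible subsheaf has $c_1(\cL)\le 0=\tfrac12 c_1(\cE)$, which is exactly semistability.

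\textbf{Main obstacle.}
The technical heart, and the step I expect to require the most care, is passing from an \emph{invertible subsheaf} $\cL\subset\cE$ to a \emph{section} of a suitable twist, and controlling the possible values of $c_1(\cL)$. Because $\cE$ has rank $2$, I would use the standard fact that any invertible subsheaf is contained in a saturated line subbundle, and then invoke $\operatorname{Pic}(X)\simeq\Z$ to identify that line subbundle with some $\cO_X(m)$; the inequalities defining (semi)stability are then purely numerical comparisons of $m$ against $\tfrac12 c_1$. The subtlety is that the definitions are phrased for invertible \emph{subsheaves} rather than subbundles, so I must ensure that saturating only increases $c_1(\cL)$ (it cannot decrease it), which guarantees that checking the inequality on saturated subsheaves suffices and that the correspondence with sections of twists is exact.
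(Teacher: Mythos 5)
Your forward directions are fine, but both converse directions contain the same genuine gap, and it sits exactly at the spot you flagged as the ``main obstacle'' without resolving it. In part (1), after excluding $m=0$ you need to exclude an invertible subsheaf $\cO_X(m)\hookrightarrow\cE$ with $m\ge 1$, and your ``cleanest route'' paragraph is circular: you observe that such a subsheaf ``would already show $\cE$ is not even semistable,'' but that is precisely the conclusion under dispute, not a contradiction — the only hypothesis available is $H^0(X,\cE)=0$, which by itself rules out only $m=0$. The alternatives you gesture at (``a Riemann--Roch or vanishing argument,'' comparing $c_1-2m\le -1$ ``against the normalization'') are never carried out and do not obviously work on an arbitrary $X$ with ${\rm Pic}(X)\simeq\Z$. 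The same issue recurs in part (2): the claimed embedding $\cO_X(1)\hookrightarrow\cO_X(m)$ for $m\ge 2$ silently assumes that $\cO_X(m-1)$ has a nonzero section, which is exactly the unproven ingredient.

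The missing lemma, which is the actual content of the standard argument the paper invokes (it proves the proposition by citing \cite[Lemma~1.2.5]{OSS80} and saying the same argument applies), is this: if $t\in H^0(X,\cO_X(j))$ is a nonzero section with $j\ge 0$, then multiplication by $t$ gives a sheaf injection $\cE(-m)\hookrightarrow\cE(-m+j)$ (injective because $X$ is integral), hence an injection $H^0(X,\cE(-m))\hookrightarrow H^0(X,\cE(-m+j))$; so the set of twists with sections is downward closed \emph{provided} positive multiples of the ample generator are effective. On $Q^n$ (and $\P^n$) this is immediate since $\cO(1)$ is very ample, and then a subsheaf $\cO_X(m)\hookrightarrow\cE$ with $m\ge 1$ yields $0\neq H^0(X,\cE(-m))\hookrightarrow H^0(X,\cE)$, contradicting $H^0(X,\cE)=0$; similarly for $m\ge 2$ against $H^0(X,\cE(-1))=0$ in part (2). (For a completely general $X$ with ${\rm Pic}(X)\simeq\Z$ the effectivity of the ample generator's positive powers in the relevant range is a genuine hypothesis one should either verify or note; the paper sidesteps this by citation, and all its applications are to $Q^n$.) Your concern about saturation, by contrast, is a red herring here: the paper's definition of (semi)stability quantifies over all invertible subsheaves directly, and any invertible subsheaf is already isomorphic to some $\cO_X(m)$ since ${\rm Pic}(X)\simeq\Z$, so no saturation step is needed. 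With the multiplication lemma inserted, your proof closes up and coincides with the paper's intended argument.
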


\begin{proof}
This can be proved by the same argment as in \cite[Lemma~1.2.5]{OSS80}.
\end{proof}

\begin{proposition}[{\cite[Corollary~3.5]{MP97}}]\label{prop:bogo}
Let $X$ be a normal projective variety over $k$ 
 , smooth in codimension two. If $\cE$ is a semistable  torsion free sheaf of rank $2$, then
$$
c_1^2(\cE)\le 4c_2(\cE).
$$
If $\cE$ is a stable  torsion free sheaf of rank $2$, then
$$
c_1^2(\cE)< 4c_2(\cE).
$$
\end{proposition}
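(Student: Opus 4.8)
The statement is the rank-$2$ Bogomolov inequality, and I would phrase both assertions through the discriminant $\Delta(\cE):=4c_2(\cE)-c_1^2(\cE)$, so that the goal becomes $\Delta(\cE)\ge 0$ in the semistable case and $\Delta(\cE)>0$ in the stable case. Since $\Delta$ and (semi)stability are unchanged when $\cE$ is twisted by a line bundle, I would first normalize $c_1(\cE)=0$. The plan has two stages: reduce to a smooth projective surface, and then run the classical symmetric-power estimate there.

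\emph{Reduction to a surface.} Because $X$ is normal, $\mathrm{Sing}(X)$ has codimension $\ge 2$ and the non-locally-free locus of the torsion-free sheaf $\cE$ also has codimension $\ge 2$; hence $c_1(\cE),c_2(\cE)$ and $\Delta(\cE)$ are well defined as cycle classes. Fixing the ample class $H$ with respect to which (semi)stability is measured, I would cut $X$ by $n-2$ general members of $|mH|$ with $m\gg 0$ to produce a surface $S$; by Bertini and the codimension bounds, for general sections $S$ can be taken smooth and disjoint from the bad loci away from finitely many points, so $\cE|_S$ is a torsion-free rank-$2$ sheaf on a smooth surface. By the Mehta--Ramanathan restriction theorem (in characteristic zero, in its version for successive high-degree sections), $\cE|_S$ is again $H|_S$-semistable (resp. stable) for $m\gg 0$ and general sections. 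As $c_i(\cE|_S)=c_i(\cE)\cdot[S]$ with $[S]=m^{n-2}H^{n-2}$, one gets $\Delta(\cE|_S)=m^{n-2}\bigl(\Delta(\cE)\cdot H^{n-2}\bigr)$, so the surface inequality yields $\Delta(\cE)\cdot H^{n-2}\ge 0$, which is the asserted inequality between the integer invariants under the conventions of the Notations.

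\emph{The surface case.} With $c_1(\cE|_S)=0$, Riemann--Roch for the symmetric powers gives $\chi(S^m(\cE|_S))=-\tfrac{1}{24}\,\Delta(\cE|_S)\,m^3+O(m^2)$. If $\Delta(\cE|_S)<0$ this forces $\chi(S^m(\cE|_S))\to+\infty$ of order $m^3$. On the other hand, in characteristic zero symmetric powers of semistable sheaves are semistable, so $S^m(\cE|_S)$ is semistable of slope $0$ and rank $m+1$; the standard bound for global sections of a semistable sheaf on a polarized surface (linear in the rank for bounded slope), applied to $S^m(\cE|_S)$ and, via Serre duality, to $S^m(\cE|_S)\otimes\omega_S$, bounds $h^0$ and $h^2$ linearly in $m$. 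Since a cubic cannot be dominated by a linear function, this contradicts the growth of $\chi$, giving $\Delta(\cE|_S)\ge 0$. Equivalently one may invoke Bogomolov's instability theorem: $\Delta<0$ produces a saturated destabilizing sub-line-bundle.

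\emph{Strictness and the main obstacle.} I expect the strict inequality in the stable case to be the hard part, because $\Delta=0$ is genuinely compatible with stability on some surfaces, so strictness cannot be purely numerical and must use the equality analysis. The route is to trace the estimate above: $\Delta(\cE|_S)=0$ together with semistability forces equality throughout the Hodge-index step, which forces $\P(\cE|_S)$ to be projectively flat, i.e. to come from a projective-unitary representation of $\pi_1(S)$. In the situation of this paper the surfaces $S\subset Q^n$ cut out above are simply connected by Lefschetz, so no nontrivial such representation exists and a stable $\cE$ cannot have $\Delta=0$; this upgrades $\ge$ to $>$. The genuine technical difficulties are therefore (i) arranging the restriction so that \emph{stability}, not merely semistability, descends to $S$, and (ii) making the equality case precise enough to rule out $\Delta=0$ for stable sheaves; the semistable inequality itself I expect to be routine once the restriction theorem is in hand.
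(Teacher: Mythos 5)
The paper offers no argument for this proposition: it is quoted verbatim from \cite{MP97}, so the only fair comparison is with the classical proof of Bogomolov's inequality, which is essentially what you reconstruct. Your skeleton is the standard one and most of it is sound: the interpretation of the inequality as $\bigl(4c_2(\cE)-c_1^2(\cE)\bigr)\cdot H^{n-2}\ge 0$ is the right reading on a variety without $A^1\cong A^2\cong\Z$; the Riemann--Roch coefficient $\chi(S^m(\cE|_S))=-\tfrac{1}{24}\Delta(\cE|_S)m^3+O(m^2)$ is correct for $c_1=0$; the linear-in-rank bounds on $h^0$ and $h^2$ are standard; and your worry (i) about descending \emph{stability} (not just semistability) to the surface is already settled in the literature --- the second Mehta--Ramanathan restriction theorem does exactly this for general complete intersections of sufficiently high degree, so it is not a genuine obstacle.

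There are, however, three real gaps. First, you cannot ``normalize $c_1(\cE)=0$'' by a twist: twisting by $\cL$ changes $c_1$ by $2c_1(\cL)$, so only the parity class of $c_1$ is adjustable; the standard repair is to run the symmetric-power argument on $\End(\cE)=\cE\otimes\cE^{\vee}$ (which has $c_1=0$ and $c_2=\Delta(\cE)$, and is semistable in characteristic zero) or on $S^{2m}\cE\otimes(\det\cE)^{-m}$. Second, on a genuinely singular normal $X$ your Bertini step fails as stated: a general complete-intersection surface $S$ cannot avoid a codimension-two locus --- it meets $\Sing(X)$ and the non-locally-free locus of $\cE$ in finitely many points, and $S$ need not be smooth at points of $\Sing(X)$; one must either pass to a resolution (where the pulled-back polarization is only nef, requiring extra care) or argue as in \cite{MP97}. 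Third, and most seriously, your strictness argument proves strict inequality only when $\pi_1(X)=1$: the equality analysis shows a stable sheaf with $\Delta=0$ is projectively flat, and such sheaves genuinely exist on non-simply-connected varieties (e.g.\ stable rank-$2$ projectively flat bundles on abelian surfaces, arising from irreducible $PU(2)$-representations of $\pi_1$, satisfy $c_1^2=4c_2$), so the strict half of the statement for an arbitrary normal projective $X$ cannot follow from the route you sketch without further hypotheses or input from \cite{MP97}. For the purposes of this paper the damage is nil --- $Q^n$ is smooth, simply connected, of Picard rank one, and your Lefschetz argument for the cut surfaces $S\subset Q^n$ does yield the strict inequality there --- but as a proof of the proposition as literally stated, the stable case remains open in your write-up, exactly at the point you yourself flagged as difficulty (ii).
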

We recall the Castelnouvo-Mumford regularity of a coherent sheaf, which measures its cohomological complexities. 
\begin{definition}\label{def:mreg}
Let $X$ be a projective variety and $B$ an ample and globally generated line bundle on $X$. A coherent sheaf $\cF$ is $m$-regular with respect to $B$ if $H^i(X, \cF\tensor B^{\tensor m-i})=0$ for $i>0$.
\end{definition}

The following theorem is used for Proposition~\ref{prop:gbgs}. 
\begin{theorem}[{\cite[Theorem~1.8.5]{L041}}]\label{thm:mreg}
Let $X$ be a projective variety and $\cF$ an $m$-regular with respect to an ample line bundle $B$. Then for all $k\ge 0$, 
\begin{enumerate}
\item $\cF\tensor B^{\tensor m+k}$ is globally generated, 
\item $H^0(X, \cF\tensor B^{\tensor m}) \tensor H^0(X, B^{\tensor k}) \ra H^0(X, \cF\tensor B^{\tensor m+k})$ is surjective, 
\item $\cF$ is $(m+k)$-regular with respect to $B$.
\end{enumerate}
\end{theorem}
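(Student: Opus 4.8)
The plan is to prove all three assertions simultaneously by induction on the dimension $d=\dim(\supp\cF)$ of the support of $\cF$, using a general member of the linear system $|B|$ to cut the dimension down. Since $k$ is algebraically closed (hence infinite) and $B$ is globally generated, I can choose a section $s\in H^0(X,B)$ whose zero divisor $H$ avoids all the associated points of $\cF$. Then multiplication by $s$ is injective on $\cF$, so for every integer $j$ there is a short exact sequence
\[
0 \to \cF\tensor B^{\tensor j-1} \xrightarrow{\;\cdot s\;} \cF\tensor B^{\tensor j} \to \cF_H\tensor (B|_H)^{\tensor j} \to 0,
\]
where $\cF_H:=\cF\tensor\cO_H$. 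The support of $\cF_H$ has dimension $d-1$, so the inductive hypothesis will apply to it once I check that $\cF_H$ is $m$-regular on $H$ with respect to $B|_H$.

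First I would verify this $m$-regularity of $\cF_H$. Taking the long exact cohomology sequence of the displayed sequence with $j=m-i$ (for $i>0$) exhibits $H^i\bigl(H,\cF_H\tensor (B|_H)^{\tensor m-i}\bigr)$ sandwiched between $H^i\bigl(X,\cF\tensor B^{\tensor m-i}\bigr)$ and $H^{i+1}\bigl(X,\cF\tensor B^{\tensor m-i-1}\bigr)$, both of which vanish by the $m$-regularity of $\cF$; hence the middle term vanishes and $\cF_H$ is $m$-regular. By induction, all three conclusions of the theorem hold for $\cF_H$. I would then establish (3) by bootstrapping from $m$ to $m+1$: the cohomology of the displayed sequence with $j=m+1-i$ gives
\[
H^i\bigl(X,\cF\tensor B^{\tensor m-i}\bigr) \to H^i\bigl(X,\cF\tensor B^{\tensor m+1-i}\bigr) \to H^i\bigl(H,\cF_H\tensor (B|_H)^{\tensor m+1-i}\bigr),
\]
where the left term vanishes by the $m$-regularity of $\cF$ and the right term vanishes because $\cF_H$ is $(m+1)$-regular (the inductive hypothesis applied to $\cF_H$ in part (3)). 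Thus $H^i\bigl(X,\cF\tensor B^{\tensor m+1-i}\bigr)=0$ for all $i>0$, i.e. $\cF$ is $(m+1)$-regular, and iterating on $k$ yields $(m+k)$-regularity for every $k\ge 0$.

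For the global generation (1) and the surjectivity of the multiplication map (2), I would compare sections on $X$ with sections on $H$ through the same short exact sequence. For (2) with $k=1$ it suffices, by a diagram chase, to combine the surjectivity of the restriction $H^0\bigl(X,\cF\tensor B^{\tensor m}\bigr)\to H^0\bigl(H,\cF_H\tensor (B|_H)^{\tensor m}\bigr)$ — which follows from the vanishing of $H^1\bigl(X,\cF\tensor B^{\tensor m-1}\bigr)$ granted by $m$-regularity — with the surjectivity of the analogous multiplication map on $H$ coming from the inductive hypothesis; the general $k$ case follows by composing these maps and applying the same reasoning to $\cO_X$ to control $H^0\bigl(X,B^{\tensor k}\bigr)$. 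Statement (1) then follows from (2) together with the $\cF_H$ case by a Nakayama-type argument, since the cokernel of the evaluation map is supported on $H$ for generic $H$, and varying $H$ over $|B|$ forces it to vanish. The main obstacle I anticipate is precisely this last bundling of (1) and (2): the cohomological vanishing needed for (3) is essentially formal once the restriction sequence is in place, but transferring global generation and surjectivity from $H$ back to $X$ requires care with the base-point behavior of $|B|$ and the compatibility of the multiplication maps, and it is here that the hypothesis that $B$ is globally generated, rather than merely ample, is genuinely used.
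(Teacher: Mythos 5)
Note first that the paper does not prove this statement at all---it is quoted verbatim from Lazarsfeld \cite[Theorem~1.8.5]{L041}---so your attempt has to be measured against the standard proof, whose skeleton (induction on $\dim\supp\cF$, a divisor $H\in|B|$ avoiding the associated points of $\cF$, the restriction sequence, and the verification that $\cF_H$ is $m$-regular on $H$) you reproduce correctly; in particular your argument for (3) is complete. But two of your steps have genuine gaps. In (2), your diagram chase requires lifting the sections $\bar u_i\in H^0(H,B|_H)$ appearing in the inductive expression $\bar\sigma=\sum\bar\tau_i\bar u_i$ back to $H^0(X,B)$: the restriction $H^0(X,B)\ra H^0(H,B|_H)$ is surjective only if $H^1(X,\cO_X)=0$, which is not among the hypotheses (only the lifting of the $\bar\tau_i$ is granted by $m$-regularity, via $H^1(X,\cF\tensor B^{\tensor m-1})=0$). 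The standard repair is to strengthen the inductive statement so that multiplication is surjective against an arbitrary base-point-free subspace $W\subseteq H^0(B)$, and to apply it on $H$ with $W=\im\bigl(H^0(X,B)\ra H^0(H,B|_H)\bigr)$, which is base-point free because $|B|$ is; without this (or a Koszul-complex argument) the chase does not close.

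The more serious gap is in (1). First, a sign slip: surjectivity of the evaluation over $H$ gives $C\tensor\cO_H=0$ for the cokernel $C$, i.e.\ $\supp C\cap H=\emptyset$---the cokernel is supported \emph{off} $H$, not ``on $H$,'' so what you need is that every point of $\supp C$ lies on some admissible divisor. But admissible divisors must avoid \emph{all} associated points of $\cF$, and the theorem is asserted for arbitrary coherent sheaves: if $\cF$ has a zero-dimensional associated point $x$ (an embedded point or a skyscraper summand), no admissible $H$ passes through $x$, so the union of admissible divisors does not cover $X$ and the variation argument cannot rule out $x\in\supp C$; the same failure occurs at any point that the finite but possibly non-injective morphism defined by $|B|$ fails to separate from an associated point. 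Your argument is sound when $\cF$ is torsion free on an integral $X$ (as in the paper's application, where the only associated point is the generic point and every $H\in|B|$ is admissible), but not in the stated generality. The standard fix, and Lazarsfeld's actual route, avoids varying $H$ entirely: by Serre's theorem $\cF\tensor B^{\tensor m+k}$ is globally generated for $k\gg 0$; the surjectivity in (2), combined with the factorization of the evaluation through $H^0(X,\cF\tensor B^{\tensor m})\tensor B^{\tensor k}\ra\cF\tensor B^{\tensor m+k}$ (using that $B^{\tensor k}$ is globally generated), then forces $\cF\tensor B^{\tensor m}$ itself to be globally generated, and (3) propagates this to every $k\ge 0$.
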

To prove Theorem~\ref{thm:main2}, we use the following vanishing theorems.
\begin{theorem}[{\cite[Theorem~1]{V82}}]\label{thm:KV-van}
Let $X$ be a smooth projective variety and $\cL$ a line bundle over $X$. If $\cL$ is nef and big, then $H^i(X, \cL\tensor K_X)=0$ for $i >0$.
\end{theorem}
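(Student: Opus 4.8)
The plan is to deduce this nef-and-big vanishing from the classical Kodaira (Akizuki--Nakano) vanishing theorem for ample line bundles, bridging the gap between ampleness and nef-and-bigness by means of Kodaira's lemma together with a cyclic covering construction, in the spirit of \cite{V82}. Write $\omega_X\cong\cO_X(K_X)$, set $n=\dim X$, and fix as the goal the assertion $H^i(X,\omega_X\otimes\cL)=0$ for $i>0$. The whole point is to bootstrap from the ample case rather than to invoke the nef-and-big case itself, so the base analytic input is strictly Kodaira vanishing.

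First I would apply Kodaira's lemma to the nef and big bundle $\cL$: for a fixed ample line bundle $A$ one has $H^0(X,\cL^{\otimes m}\otimes A^{-1})\ne 0$ for $m\gg 0$, so there exist an integer $m>0$ and an effective divisor $D$ with $\cL^{\otimes m}\cong A\otimes\cO_X(D)$. Numerically this writes $\cL\equiv \tfrac1m A+\tfrac1m D$ as an ample $\Q$-divisor plus an effective $\Q$-divisor. Next I would take a log resolution $\mu\colon X'\to X$ making $\mu^{*}D$ and the exceptional locus simple normal crossing; the pullback $\mu^{*}\cL$ remains nef and big, while the pullback of the ample part can be perturbed to a genuine ample $\Q$-divisor $\cM'$ on $X'$ by subtracting a small multiple of the exceptional divisor (Fujita).

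The heart of the argument is the cyclic covering trick. Clearing denominators, I would use a relation of the form $(\text{line bundle})^{\otimes m}\cong\cO_{X'}(\widetilde D)$ with $\widetilde D$ a reduced simple normal crossing divisor to build the associated $m$-fold cyclic cover $\pi\colon Y\to X'$ and pass to a smooth model. The canonical direct image then decomposes as
\[ \pi_{*}\omega_{Y}\cong\bigoplus_{j=0}^{m-1}\omega_{X'}\otimes(\text{twist}_j)\otimes\cO_{X'}\!\Big(-\big\lfloor\tfrac{j}{m}\widetilde D\big\rfloor\Big), \]
and the covering data is arranged so that exactly one summand is $\omega_{X'}\otimes\mu^{*}\cL$, while the positivity carried by $\cM'$, after being pulled back along the finite map $\pi$, becomes an honest ample line bundle on $Y$. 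Kodaira vanishing on the smooth model of $Y$ then kills $H^i(Y,\omega_Y\otimes\pi^{*}(\text{ample}))$ for $i>0$, and reading off the distinguished summand isolates $H^i(X',\omega_{X'}\otimes\mu^{*}\cL)=0$ for $i>0$.

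Finally I would descend to $X$. Grauert--Riemenschneider vanishing (which has an independent analytic/$L^2$ proof, so there is no circularity) gives $R^q\mu_{*}\omega_{X'}=0$ for $q>0$ and $\mu_{*}\omega_{X'}\cong\omega_X$; since $\mu^{*}\cL$ is a pullback, the projection formula upgrades this to $R^q\mu_{*}(\omega_{X'}\otimes\mu^{*}\cL)=0$ for $q>0$ and $\mu_{*}(\omega_{X'}\otimes\mu^{*}\cL)\cong\omega_X\otimes\cL$, so the Leray spectral sequence collapses to $H^i(X,\omega_X\otimes\cL)\cong H^i(X',\omega_{X'}\otimes\mu^{*}\cL)=0$ for all $i>0$, which is the claim. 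The main obstacle is the bookkeeping in the covering step: one must choose the branch divisor, the integer $m$, and the resolution so that the fractional rounding $\lfloor\tfrac{j}{m}\widetilde D\rfloor$ produces the summand $\omega_{X'}\otimes\mu^{*}\cL$ on the nose, so that the ample perturbation $\cM'$ survives pullback to $Y$ as a strictly positive class, and so that the singularities of $Y$ are mild enough that the cohomology of $\omega_Y$ is correctly computed on a smooth model. Coordinating these rounding and positivity constraints so that the classical Kodaira theorem applies with no residual twist is the genuine technical crux; the remaining steps are formal.
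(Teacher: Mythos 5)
This statement is quoted in the paper as a known result (Viehweg's vanishing theorem, \cite[Theorem~1]{V82}); the paper offers no proof of its own, so the only meaningful comparison is with the cited source. Your sketch reproduces the standard Kawamata--Viehweg argument --- Kodaira's lemma, log resolution with a Fujita-type perturbation of the pulled-back ample part, the cyclic covering trick with the eigensheaf decomposition of $\pi_{*}\omega_{Y}$, Kodaira vanishing upstairs, and descent via Grauert--Riemenschneider and Leray --- and this is indeed the method of the reference, so in outline your route is the right one and matches the citation. The descent step in particular is handled correctly: the projection formula plus $R^{q}\mu_{*}\omega_{X'}=0$ does collapse Leray exactly as you say, with no circularity since Grauert--Riemenschneider has an independent proof.

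There is, however, one substantive point that your sketch leaves underdetermined, and it is precisely where nefness (as opposed to mere bigness) enters. With the decomposition $\cL^{\tensor m}\cong A\tensor\cO_{X}(D)$ at a \emph{fixed} $m$ from Kodaira's lemma, the fractional boundary $\tfrac{1}{m}D$ has coefficients you cannot control, so the roundings $\lfloor\tfrac{j}{m}\widetilde D\rfloor$ cannot in general be arranged to isolate the summand $\omega_{X'}\tensor\mu^{*}\cL$ ``on the nose.'' The standard fix is to replace this by $\cL^{\tensor(m+k)}\cong\left(A\tensor\cL^{\tensor k}\right)\tensor\cO_{X}(D)$ for $k\gg0$: since $\cL$ is nef, $A\tensor\cL^{\tensor k}$ is still ample, while the boundary $\tfrac{1}{m+k}D$ now has coefficients strictly less than $1$, which is what makes the covering construction close up with no residual twist (up to a $\mu$-exceptional correction absorbed by $\mu_{*}$). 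This is not cosmetic bookkeeping: if your outline worked with the fixed-$m$ decomposition alone, it would ``prove'' vanishing for every big line bundle, which is false; any correct write-up must make this coefficient-shrinking step, and hence the use of nefness, explicit. You flag the rounding coordination as the technical crux but defer it, so as written the proposal is an accurate roadmap of the cited proof with its one genuinely nontrivial step left open.
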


\begin{theorem}[{\cite[Theorem~1]{L75}}]\label{thm:le}
Let $X$ be a smooth projective variety and $\cE$ an ample vector bundle of rank $r$ over $X$. Then $H^i(X, \cE\tensor K_X)=0$ for $i \ge r$.
\end{theorem}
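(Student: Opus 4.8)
The plan is to deduce the statement from the Akizuki--Nakano vanishing theorem applied to the projectivization $\pi\colon \P(\cE)\ra X$, following Le Potier's original strategy. Write $Y=\P(\cE)$, $N=\dim Y = n+r-1$ with $n=\dim X$, and let $\xi=\cO_{\P(\cE)}(1)$ be the tautological divisor, normalized so that $\pi_*\cO_{\P(\cE)}(1)=\cE$ and $R^j\pi_*\cO_{\P(\cE)}(1)=0$ for $j>0$. Since $\cE$ is ample, $\xi$ is an ample line bundle on the smooth projective variety $Y$. I will prove the sharper Nakano-type statement $H^q(X,\Omega_X^p\otimes\cE)=0$ for all $p+q\ge n+r$, and then specialize to $p=n$, for which $\Omega_X^n=K_X$ recovers exactly $H^i(X,\cE\otimes K_X)=0$ for $i\ge r$.

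The core computation is to identify $H^q(X,\Omega_X^p\otimes\cE)$ with a Hodge-type cohomology group on $Y$. First I would record the relative cotangent sequence $0\ra \pi^*\Omega_X^1\ra \Omega_Y^1\ra \Omega_{Y/X}^1\ra 0$ and pass to $p$-th exterior powers, producing a finite filtration of $\Omega_Y^p$ whose graded pieces are $\pi^*\Omega_X^s\otimes \Omega_{Y/X}^{p-s}$ for $0\le s\le p$. Twisting by $\xi$ and applying the projection formula, the relative cohomology of the $s$-th piece is $\Omega_X^s\otimes R^\bullet\pi_*(\Omega_{Y/X}^{p-s}\otimes\xi)$, and on each fibre $\P^{r-1}$ the restricted bundle is $\Omega_{\P^{r-1}}^{p-s}(1)$. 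By Bott's formula, $H^\bullet(\P^{r-1},\Omega_{\P^{r-1}}^{j}(1))=0$ for every $1\le j\le r-1$, so every graded piece with $p-s\ge 1$ has vanishing higher direct images in all degrees. Only the piece $s=p$, namely $\pi^*\Omega_X^p\otimes\xi$, survives, contributing $\pi_*(\pi^*\Omega_X^p\otimes\xi)=\Omega_X^p\otimes\cE$ in degree zero and nothing in higher degrees. Chasing the long exact sequences attached to the filtration then yields $R^j\pi_*(\Omega_Y^p\otimes\xi)=0$ for $j>0$ together with $\pi_*(\Omega_Y^p\otimes\xi)\cong\Omega_X^p\otimes\cE$, and the Leray spectral sequence collapses to give $H^q(Y,\Omega_Y^p\otimes\xi)\cong H^q(X,\Omega_X^p\otimes\cE)$.

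To finish, I would invoke the Akizuki--Nakano vanishing theorem on $Y$ (valid in characteristic zero, which is the standing hypothesis): since $\xi$ is ample and $\dim Y=n+r-1$, one has $H^q(Y,\Omega_Y^p\otimes\xi)=0$ whenever $p+q>n+r-1$. Combined with the isomorphism above, this gives $H^q(X,\Omega_X^p\otimes\cE)=0$ for $p+q\ge n+r$, and setting $p=n$ proves the theorem.

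I expect the main obstacle to be the bookkeeping in the middle step rather than any single deep input. Concretely, one must verify the relative Bott vanishing $H^\bullet(\P^{r-1},\Omega^j_{\P^{r-1}}(1))=0$ for $1\le j\le r-1$ and then argue carefully that a filtration whose graded pieces all have vanishing pushforward (save one, concentrated in degree zero) forces the total pushforward to agree with that single piece; this is where the precise range $p+q\ge n+r$, and in particular the appearance of the rank $r$, is produced. It is worth noting that a naive attempt using only Kodaira vanishing fails: rewriting $\pi^*K_X\otimes\xi = K_Y\otimes\xi^{r+1}\otimes\pi^*(\det\cE)^{-1}$ via the canonical bundle formula $K_Y=\pi^*(K_X\otimes\det\cE)\otimes\xi^{-r}$ leaves the non-ample twist $\pi^*(\det\cE)^{-1}$, so the stronger Nakano vanishing on $Y$ is genuinely needed.
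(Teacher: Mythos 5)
Your proof is correct, and it is essentially Le Potier's original argument from the cited source \cite[Theorem~1]{L75} (the paper itself states this theorem with a citation and gives no proof): the identification $H^q(\P(\cE),\Omega^p_{\P(\cE)}\otimes\xi)\cong H^q(X,\Omega^p_X\otimes\cE)$ via the cotangent filtration and Bott vanishing on the $\P^{r-1}$-fibres, followed by Akizuki--Nakano on $\P(\cE)$ and the specialization $p=\dim X$, is exactly the standard route, and your numerology ($p+q\ge n+r$, hence $i\ge r$ for $p=n$) checks out. Your closing observation that plain Kodaira vanishing fails because $\pi^*K_X\otimes\xi=K_{\P(\cE)}\otimes\xi^{r+1}\otimes\pi^*(\det\cE)^{-1}$ carries the non-nef twist $\pi^*(\det\cE)^{-1}$ is also accurate.
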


\subsection{Splitting criterions and numerical properties}

The following splitting criterions play key roles in the proof of Theorem~\ref{thm:main2}. 
\begin{proposition}[{\cite[Corollary~4.3]{APW94}}]\label{lem:unsta-chern}
Let $\cF$ be a globally generated bundle of rank $2$ on $Q^n$ where $n\ge 5$. If $\cF$ is unstable and 
$$
c_2(\cF) \le (n-2)(c_1(\cF)-n+2)+n-3, 
$$
then $\cF$ splits.
\end{proposition}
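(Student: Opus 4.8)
The plan is to exploit the maximal destabilizing sub-line bundle and reduce the statement to the vanishing of a codimension-two degeneracy locus. Since $\Pic(Q^n)\cong\Z\langle H\rangle$ with $H=H_{Q^n}$ and $\cF$ is unstable of rank $2$, there is a saturated sub-line bundle $\cO(a)\hookrightarrow\cF$ with $2a>c_1(\cF)$; writing $b=c_1(\cF)-a$ (so $a>b$) and using saturation, the quotient is $\cI_Z(b)$ for a subscheme $Z\subset Q^n$ of codimension $\ge 2$, giving
$$0\ra \cO(a)\ra \cF\ra \cI_Z(b)\ra 0,\qquad c_2(\cF)=ab+\deg Z.$$
First I would note that it suffices to prove $Z=\emptyset$: then the sequence reads $0\ra\cO(a)\ra\cF\ra\cO(b)\ra 0$, which splits because $\Ext^1(\cO(b),\cO(a))\cong H^1(Q^n,\cO(a-b))=0$ for $n\ge 3$. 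Global generation of $\cF$ passes to the quotient $\cI_Z(b)$, and a globally generated twisted ideal with $b\le 0$ must equal $\cO$; hence $b\ge 1$ whenever $Z\neq\emptyset$.

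Next I would extract the rigid geometry of a nonempty $Z$. Since the zero locus of a section of the rank-$2$ bundle $\cF(-a)$ has codimension $\le 2$, the scheme $Z$ is a local complete intersection of pure codimension $2$, and the Serre construction gives $N_{Z/Q^n}\cong\cF(-a)|_Z$, so $\det N_{Z/Q^n}\cong\cO_Z(b-a)$. Adjunction then yields the subcanonical identity
$$\omega_Z\cong\big(\omega_{Q^n}\otimes\det N_{Z/Q^n}\big)\big|_Z\cong\cO_Z(b-a-n),$$
equivalently $-K_Z=(a-b+n)\,H|_Z$ with $a-b+n\ge n+1$. Thus $Z$ is a (possibly singular) codimension-two subvariety whose anticanonical class is an unusually large multiple of the hyperplane class, strictly exceeding $\dim Z+1=n-1$; by Kobayashi–Ochiai no smooth $Z$ can do this, so the tension between this forced positivity and the smallness of $c_2(\cF)$ is what I would turn into a contradiction.

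To realize the contradiction I would cut $Z$ down to a curve. Intersecting $Z$ with $n-3$ general hyperplane sections of $Q^n$ produces a Gorenstein curve $C$ of degree $\deg Z\ge 1$ with $\omega_C\cong\cO_C(b-a-3)$ by iterated adjunction; since $a-b\ge 1$ this gives $2p_a(C)-2=(b-a-3)\deg C\le -4\deg C<-2$, so $p_a(C)\le -1$, which is impossible if $C$ is connected and generically reduced. The heart of the matter, and the step I expect to be the main obstacle, is to convert the hypothesis $c_2(\cF)\le(n-2)(c_1(\cF)-n+2)+n-3$ into the statement that $Z$ is connected in codimension one, so that a general linear-section curve remains connected and $p_a(C)\ge 0$ forces $Z=\emptyset$. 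Concretely I would bound the number of irreducible components of $Z$ and the dimension of their mutual intersections, each component inheriting the same highly-positive subcanonical structure, in terms of $\deg Z=c_2(\cF)-ab$ and the generic splitting type $(a,b)$ of $\cF$ on the lines of $Q^n$, and verify that the allowed range of $c_2$ leaves no room for a disconnecting configuration. This degree-versus-connectedness bookkeeping on the quadric, rather than any single conceptual step, is where the calculations become delicate, and it is exactly the point at which the quadric case is heavier than the projective-space analysis of \cite{APW94}.
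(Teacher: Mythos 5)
First, a point of orientation: the paper contains no proof of Proposition~\ref{lem:unsta-chern} at all --- it is quoted as a black box from \cite[Corollary~4.3]{APW94} --- so your attempt can only be judged on its own terms, and on those terms it has a genuine gap, one you flag yourself but misdiagnose. Everything through the adjunction computation is sound: the destabilizing sequence $0 \ra \cO(a) \ra \cF \ra \cI_Z(b) \ra 0$, the reduction to showing $Z=\emptyset$ (the extension then splits since $H^1(Q^n,\cO(a-b))=0$), the fact that a nonempty $Z$ is a local complete intersection of pure codimension two with $\omega_Z \cong \cO_Z(b-a-n)$, and the consequence $2p_a(C)-2=(b-a-3)\deg C \le -3$ for a general linear curve section $C$ of a connected component of $Z$. (Minor slip: with the paper's definitions, ``unstable'' only yields $2a \ge c_1(\cF)$, not $2a>c_1(\cF)$; this is harmless.) But the final contradiction needs $h^0(\cO_C)=1$, i.e.\ $C$ connected \emph{and generically reduced}, and it is reducedness, not connectedness, that you cannot arrange. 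Connectedness is essentially free: adjunction is local, so you may replace $Z$ by a connected component; lci implies Cohen--Macaulay, hence connected in codimension one by Hartshorne's connectedness theorem, and this persists under general hyperplane sections by Bertini-type connectedness theorems. So the ``degree-versus-connectedness bookkeeping'' to which you defer the hypothesis is aimed at a non-problem. The real problem is that $Z$ is the zero scheme of one \emph{specific} section of $\cF(-a)$, the one coming from the destabilizing subsheaf; $\cF(-a)$ is not globally generated, Kleiman--Bertini is unavailable, and nothing prevents $Z$ from being non-reduced along every component. For connected non-reduced Gorenstein curves $p_a\le -1$ is not absurd: a ribbon on $\P^1$ with conormal bundle of degree $e\ge 0$ has $p_a=-e-1$ and realizes exactly the numerics $\omega_C\cong\cO_C(b-a-3)$ you derive. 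The one case your argument cannot dismiss is therefore precisely the case that remains.

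Relatedly, the hypothesis $c_2(\cF)\le (n-2)(c_1(\cF)-n+2)+n-3$ is never actually used: your reduced case goes through without it, so in your scheme its entire burden falls on the non-reduced case, where you offer only a plan and no mechanism by which component counts or degrees would control multiple structures. This is exactly where the argument of \cite{APW94} diverges from yours. There, global generation is exploited to take a \emph{general} section $t$ of $\cF$ itself, whose zero scheme is smooth of pure codimension two by Kleiman--Bertini (the same device this paper invokes in the proof of Proposition~\ref{prop:nexist}); instability enters only through the destabilizing section $s$ via $s\wedge t \in H^0(Q^n,\cO(c_1-a))$, which places the smooth zero scheme on a hypersurface of degree at most $\lfloor c_1/2 \rfloor$; and the Chern class bound is what feeds the resulting complete-intersection/splitting criterion. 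In short, the correct route buys smoothness from global generation and lets the numerical hypothesis do the work, whereas your route buys positivity from the destabilizing section and, in exchange, loses all control over the scheme structure of $Z$ --- which is where it breaks.
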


\begin{proposition}[{\cite[Proposition~6.2]{APW94}}]\label{lem:12sin}
Let $\cF$ be a vector bundle of rank $2$ on $Q^n$ where $n\ge 12$, admitting a section whose zero locus is smooth and of pure codimension two. Assume $c_1(\cF)^2<4c_2(\cF)$. Then
$$
c_2(\cF)>\displaystyle\frac{71}{4{\rm sin}^2\left(\displaystyle\frac{\pi}{n-1}\right)}.
$$
\end{proposition}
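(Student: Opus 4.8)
The plan is to study the zero locus $Z := (s)_0 \subset Q^n$ of the given section directly and to convert the desired lower bound on $c_2(\cF)$ into a statement about the geometry of the smooth $(n-2)$-fold $Z$. First I would record the standard consequences of the hypotheses: the Koszul complex of $s$ identifies the normal bundle $N_{Z/Q^n}$ with $\cF|_Z$ and gives $[Z] = c_2(\cF)$ in $A^2(Q^n)$, while adjunction yields $-K_Z = (nH_{Q^n} - c_1(\cF))|_Z = (n - c_1)H_Z$, where $H_Z$ is the restriction of the hyperplane class. Thus the numerical invariants of $Z$—in particular its $H_Z$-degree, expressible through the Segre numbers of $\cF|_Z$—are explicit polynomial expressions in $c_1$ and $c_2$, and the proposition amounts to bounding these degrees of $Z$ from below.

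Since $n \ge 12$, we have $2\dim Z - \dim Q^n = n - 4 \ge 8$, so the Barth--Larsen / Sommese hyperplane-type theorem applies in a wide range: the restriction $A^j(Q^n) \to A^j(Z)$ is an isomorphism for $j \le (n-4)/2$, and in particular $\Pic(Z) = \Z\langle H_Z\rangle$ and $A^2(Z) = \Z$. Hence $Z$ has Picard number one, its Chow ring agrees with that of $Q^n$ in low degrees, and—in the range $c_1 < n$, which is the only case requiring work—$Z$ is a Fano manifold of index $n - c_1$. By Kobayashi--Ochiai this index is at most $n-1$, pinning $c_1$ into a bounded range and reducing the problem to a lower bound for the degree of a Picard-number-one Fano $(n-2)$-fold whose intersection theory mimics that of the quadric.

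The heart of the argument, and the step I expect to be the main obstacle, is extracting the trigonometric constant $\tfrac{71}{4\sin^2(\pi/(n-1))}$. Here I would exploit the $\mathfrak{sl}_2$-Lefschetz action on $A^\bullet(Z)$ together with the Hodge--Riemann bilinear relations (equivalently, iterated application of the Hodge index theorem): the assumption $c_1^2 < 4c_2$ says precisely that a certain quadratic form built from $c_1(\cF|_Z)$ and $c_2(\cF|_Z)$ is indefinite in the relevant sense, and propagating this positivity up the Lefschetz tower produces a three-term recursion among the intersection numbers $H_Z^{\dim Z - 2i}\cdot c_1^{a}\cdot c_2^{b}$. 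The extremal configuration of such a recursion is governed by Chebyshev polynomials of the second kind $U_{n-2}$, whose largest root is $\cos(\pi/(n-1))$; the positivity constraint therefore degenerates exactly when this root is approached, which after clearing denominators is what forces the factor $1/\sin^2(\pi/(n-1))$. The fixed rational constant $71/4$ would then emerge from bounding the finitely many low-degree (non-leading) Chern and Segre contributions uniformly in $n$.

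Finally I would convert the resulting lower bound for the degree of $Z$ back into the stated inequality for $c_2(\cF)$ via the dictionary of the first paragraph, using Proposition~\ref{prop:chow} to handle the intersection numbers on the odd-dimensional quadrics and the analogous relations in the even-dimensional case. The delicate point throughout is that both the indefiniteness input $c_1^2 < 4c_2$ and the Lefschetz positivity must be read on $Z$ rather than on $Q^n$, which is legitimate only because Barth--Larsen guarantees that the low-degree Chow groups of $Z$ and $Q^n$ coincide; keeping careful track of where $c_1$ can be absorbed is exactly what makes the constant explicit.
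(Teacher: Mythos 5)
The first thing to note is that the paper contains no proof of this statement at all: it is imported verbatim as \cite[Proposition~6.2]{APW94}, so the only proof to compare against is the one in the source. Judged on its own terms, your sketch has a genuine gap exactly where the theorem lives. The Hodge--Riemann bilinear relations and the hard Lefschetz action on $A^{\bullet}(Z)$ do not give one-sided positivity of the numbers $H_Z^{\dim Z-2i}\cdot c_1^{a}c_2^{b}$: Hodge-theoretic positivity alternates in sign on primitive pieces and constrains the intersection form, not the Chern numbers of a particular bundle, so the ``three-term recursion whose extremal configuration is Chebyshev'' never materializes from the input you feed it. The mechanism that actually produces $\cos\left(\pi/(n-1)\right)$ --- and which underlies the constants $\alpha=4\cos^2(\pi/m)$ of \cite[Proposition~1.4]{APW94} used elsewhere in this paper (e.g.\ $3=4\cos^2(\pi/6)$, $2+\sqrt{2}=4\cos^2(\pi/8)$) --- is positivity of Segre classes of a \emph{nef} bundle: since $Q^n$ is homogeneous, $T_{Q^n}$ is globally generated, hence so is its quotient $N_{Z/Q^n}\cong\cF|_Z$; writing the Chern roots as $re^{\pm i\theta}$ (legitimate precisely because $c_1^2<4c_2$), one gets
$$
s_k(\cF|_Z)\cdot H_Z^{\,n-2-k}=r^k\,\frac{\sin((k+1)\theta)}{\sin\theta}\cdot\deg Z\;\ge\;0
\quad\text{for all }0\le k\le \dim Z=n-2,
$$
which forces $\theta\le\pi/(n-1)$ and hence $4c_2-c_1^2=4c_2\sin^2\theta\le 4c_2\sin^2\left(\pi/(n-1)\right)$. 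So your instinct that Chebyshev polynomials of the second kind with largest root $\cos(\pi/(n-1))$ govern the bound is correct, but the positivity must come from nefness of the normal bundle of $Z$ in the homogeneous ambient space, not from Lefschetz theory on $Z$; as written, that central step would fail.

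Independently, the constant $71/4$ is not derivable from anything in your outline. The Segre-positivity estimate above yields only $c_2\ge\bigl(4c_2-c_1^2\bigr)\big/\bigl(4\sin^2(\pi/(n-1))\bigr)$, and integrality of $4c_2-c_1^2>0$ gives the constant $1/4$, not $71/4$; to reach the stated bound one needs the separate inequality $4c_2(\cF)-c_1(\cF)^2>71$, a genuinely additional input (this is where the hypothesis that the section has a \emph{smooth} codimension-two zero locus must actually be exploited, as in \cite{APW94}). Saying the constant ``would emerge from bounding the finitely many low-degree contributions uniformly in $n$'' is not an argument and cannot single out a specific integer. Two smaller overreaches: Barth--Larsen-type theorems give comparison of topological invariants (homology, homotopy, hence $\Pic$), not isomorphisms of Chow groups $A^j(Q^n)\to A^j(Z)$ in the range you claim; and the Kobayashi--Ochiai step, correctly carried out, gives $n-c_1\le n-1$, i.e.\ the one-sided bound $c_1\ge 1$, rather than pinning $c_1$ ``into a bounded range.''
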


Following lemmas are used in the next sections. For calculations, we refer to Proposition~\ref{prop:chow}, \cite[Theorem~4.1 of Appendix~A]{H77} and \cite{F98}.

\begin{lemma}\label{lem:5RR}
Let $\cE$ be a vector bundle of rank $2$ on $Q^5$. Then
$$
\chi(\cE)= 2 +\frac{894}{360}c_1 +\frac{55}{24}(c_1^2
 -2c_2)+c_1^3-3c_1c_2+\frac{5}{24}(c_1^4-4c_1^2c_2+2c_2^2)+\frac{1}{60}(c_1^5-5c_1^3c_2+5c_1c_2^2).
$$
\end{lemma}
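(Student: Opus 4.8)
The plan is to compute the Euler characteristic $\chi(\cE)$ via the Hirzebruch--Riemann--Roch theorem on $Q^5$, namely $\chi(\cE) = \int_{Q^5} \mathrm{ch}(\cE) \cdot \mathrm{td}(Q^5)$, and then express everything in terms of the integers $c_1 = c_1(\cE)$ and $c_2 = c_2(\cE)$ using the intersection theory of $Q^5$ recorded in Proposition~\ref{prop:chow}. First I would assemble the three ingredients: the Chern character $\mathrm{ch}(\cE)$ of a rank $2$ bundle expanded through degree $5$, the total Chern class $c(TQ^5)$ of the tangent bundle, and from the latter the Todd class $\mathrm{td}(Q^5)$ through degree $5$.

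For the Chern character I would use $\mathrm{ch}(\cE) = 2 + c_1 + \tfrac12(c_1^2 - 2c_2) + \tfrac16(c_1^3 - 3c_1 c_2) + \tfrac{1}{24}(c_1^4 - 4c_1^2 c_2 + 2c_2^2) + \tfrac{1}{120}(c_1^5 - 5 c_1^3 c_2 + 5 c_1 c_2^2)$, which is just Newton's identities applied to the Chern roots of a rank $2$ bundle. For the tangent bundle of $Q^5$ I would start from the normal bundle sequence $0 \to TQ^5 \to T\P^6|_{Q^5} \to \cO_{Q^5}(2) \to 0$ together with the Euler sequence for $\P^6$, giving $c(TQ^5) = (1+h)^7/(1+2h)$ where $h = H_{Q^5}$; expanding this rational expression as a power series in $h$ furnishes $c_1,\dots,c_5$ of $TQ^5$. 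From these Chern classes I would then compute the Todd class $\mathrm{td}(Q^5) = 1 + \tfrac12 c_1 + \tfrac{1}{12}(c_1^2 + c_2) + \tfrac{1}{24} c_1 c_2 + \tfrac{1}{720}(-c_1^4 + 4 c_1^2 c_2 + c_1 c_3 + 3 c_2^2 - c_4) + \cdots$ through degree $5$, using the standard universal Todd polynomials.

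Next I would multiply $\mathrm{ch}(\cE)$ by $\mathrm{td}(Q^5)$ and extract the degree-$5$ component, which is the top-dimensional part integrated over $Q^5$. The evaluation step uses Proposition~\ref{prop:chow}: every monomial in $h$ and the abstract classes is rewritten so that $\int_{Q^5} H^5 = 2$ (since $H^3 \cdot P = 1$ and $H^3 = 2P$ forces $\deg H^5 = 2$), giving the numerical coefficients $\tfrac{894}{360}$, $\tfrac{55}{24}$, $1$, $\tfrac{5}{24}$, and $\tfrac{1}{60}$ attached to the respective polynomial expressions in $c_1, c_2$. The grouping in the statement---$(c_1^2-c_2)$, $(c_1^3 - 3c_1 c_2)$, $(c_1^4 - 4c_1^2 c_2 + 2 c_2^2)$, $(c_1^5 - 5 c_1^3 c_2 + 5 c_1 c_2^2)$---is exactly the pattern inherited from the Chern-character terms, so most cross terms between $\mathrm{ch}$ and $\mathrm{td}$ should consolidate cleanly along these.

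The main obstacle will be bookkeeping rather than conceptual difficulty: carrying the rational-coefficient expansions of both $(1+h)^7/(1+2h)$ and the Todd polynomials accurately through degree $5$, and keeping track of which contributions land in which graded piece after multiplication. In particular the degree-$5$ Todd coefficient involving $c_1,\dots,c_5$ of $Q^5$ is where arithmetic errors are most likely, and verifying the somewhat unusual-looking constant $\tfrac{894}{360}$ on the linear term will serve as a useful consistency check. I would cross-check the final answer by confirming $\chi(\cO_{Q^5}) = 2$ at $c_1 = c_2 = 0$, and ideally by testing against $\chi(\cO_{Q^5}(t) \oplus \cO_{Q^5})$ or a split bundle whose Euler characteristic can be computed independently via the known $\chi(\cO_{Q^5}(t))$.
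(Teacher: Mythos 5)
Your plan coincides with the paper's own treatment: the lemma appears there with no written proof, only the pointer just above it to Hirzebruch--Riemann--Roch (\cite[Theorem~4.1 of Appendix~A]{H77}, \cite{F98}) and to Proposition~\ref{prop:chow}, which is exactly the computation you outline. Your ingredients are all correct: the rank-$2$ Chern character through degree $5$, $c(TQ^5)=(1+h)^7/(1+2h)=1+5h+11h^2+13h^3+9h^4+3h^5$, and the universal Todd polynomials, which give $\mathrm{td}(Q^5)=1+\tfrac{5}{2}h+3h^2+\tfrac{55}{24}h^3+\tfrac{149}{120}h^4+\tfrac{1}{2}h^5$, together with $\int_{Q^5}H^5=2$. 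One small slip in your justification of the last point: Proposition~\ref{prop:chow}(ii) with $k=2$ reads $H^2\cdot P=1$, not $H^3\cdot P=1$ (the class $P$ is half of $H^3$); the conclusion $\int_{Q^5}H^5=2$ is of course just the statement that $Q^5$ has degree $2$.

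Be aware, however, that a faithful execution of your plan will \emph{not} reproduce the displayed formula verbatim. Since $A^2(Q^5)=\Z[H^2]$, the paper's convention reads $c_2(\cE)=c_2H^2$, so $\mathrm{ch}_2(\cE)=\tfrac{1}{2}(c_1^2-2c_2)H^2$, and the $\mathrm{ch}_2\cdot\mathrm{td}_3$ contribution to $\chi(\cE)$ is $\tfrac{55}{24}(c_1^2-2c_2)$, not $\tfrac{55}{24}(c_1^2-c_2)$ as printed; all other terms come out exactly as stated. In other words, your computation, carried out correctly, exposes a typo in the lemma rather than proving it as written. Note also that the consistency checks you propose cannot detect this, since $\cO_{Q^5}$ and $\cO_{Q^5}(t)\oplus\cO_{Q^5}$ all have $c_2=0$; test instead a split bundle with $c_2\neq 0$. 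For $\cO_{Q^5}(1)^{\oplus 2}$, i.e.\ $(c_1,c_2)=(2,1)$, one has $\chi=2\chi(\cO_{Q^5}(1))=14$, which the corrected formula returns, whereas the formula as printed gives $\tfrac{391}{24}\notin\Z$; likewise the printed formula would assign the Cayley bundle $(c_1,c_2)=(-1,1)$ of Definition~\ref{def:cayley} the impossible value $\chi=\tfrac{31}{24}$, while the corrected formula gives $-1$. Reassuringly, the non-integrality conclusions drawn from this lemma in the proof of Proposition~\ref{prop:5-11} (the cases $(c_1,c_2)=(0,1),(0,2),(-1,2)$) remain valid after the correction.
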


\begin{lemma}\label{lem:segre}
On $Q^6$
$$
s_6=2c_1^6-10c_1^4c_2+12c_1^2c_2^2-2c_2^3.
$$
\end{lemma}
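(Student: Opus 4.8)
The plan is to reduce the computation to two inputs: a universal expression for the Segre class $s_6(\cE)$ as a polynomial in the Chern classes $c_1(\cE)$ and $c_2(\cE)$, valid over any base, together with the intersection numbers of $Q^6$.

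First I would make the Segre class explicit. On $\P(\cE)$ the Grothendieck relation reads $\xi_\cE^2=\pi^*c_1(\cE)\,\xi_\cE-\pi^*c_2(\cE)$, and combined with $\pi_*(\xi_\cE)=1$ and $\pi_*(1)=0$ it determines every pushforward $\pi_*(\xi_\cE^{1+i})$. Equivalently, the total Segre class is the formal inverse $s(\cE)=\bigl(1-c_1(\cE)+c_2(\cE)\bigr)^{-1}=\sum_{k\ge 0}\bigl(c_1(\cE)-c_2(\cE)\bigr)^k$. Isolating the component of degree $6$, where $c_1(\cE)$ and $c_2(\cE)$ carry degrees $1$ and $2$ respectively, I obtain the universal identity
$$s_6(\cE)=c_1(\cE)^6-5\,c_1(\cE)^4c_2(\cE)+6\,c_1(\cE)^2c_2(\cE)^2-c_2(\cE)^3.$$
Since every monomial on the right involves an even power of $c_1(\cE)$, this formula is unaffected by the sign convention chosen for the first Chern class.

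Next I would evaluate this class on $Q^6$. Because the codimension $2$ lies strictly below the middle dimension $3$ of the six-dimensional quadric, $A^2(Q^6)$ is the free group generated by $H_{Q^6}^2$; hence the generator $L$ of $A^2(Q^6)$ equals $H_{Q^6}^2$, and in the convention of the paper $c_1(\cE)=c_1H_{Q^6}$ while $c_2(\cE)=c_2H_{Q^6}^2$. Substituting these, each degree-$6$ monomial becomes a multiple of $H_{Q^6}^6$, so that $s_6=(c_1^6-5c_1^4c_2+6c_1^2c_2^2-c_2^3)\,H_{Q^6}^6$. Finally, $Q^6\subset\P^7$ has degree $2$, i.e. $\int_{Q^6}H_{Q^6}^6=2$, and multiplying through by $2$ produces the asserted formula.

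The one genuine point to verify is the intersection theory of the even-dimensional quadric $Q^6$, which is not covered by Proposition~\ref{prop:chow} (stated there only for odd quadrics): one needs both that $A^2(Q^6)=\Z\,H_{Q^6}^2$, so that the integer $c_2$ is measured against $H_{Q^6}^2$ and not against a smaller generator, and that $\int_{Q^6}H_{Q^6}^6=2$. Both are standard consequences of the description of the Chow ring of a smooth quadric together with $\deg Q^6=2$; indeed perfectness of the pairing $A^2(Q^6)\times A^4(Q^6)\to A^6(Q^6)$ forces $H_{Q^6}^2$ to be a generator, while the degree-$2$ fact supplies the overall factor of $2$ visible in the final answer. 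Everything else is the routine extraction of a single graded piece of a power series.
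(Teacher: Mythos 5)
Your proof is correct and follows essentially the same route as the paper: both compute the universal degree-$6$ Segre polynomial $c_1^6-5c_1^4c_2+6c_1^2c_2^2-c_2^3$ from the relation $s_t(\cE)c_{-t}(\cE)=1$ (you sum the geometric series at once, the paper lists $s_1,\dots,s_5$ recursively) and then evaluate on $Q^6$ via $\int_{Q^6}H_{Q^6}^6=2$, which produces the overall factor of $2$. Your only addition is making explicit the facts $A^2(Q^6)=\Z\,H_{Q^6}^2$ and $\deg Q^6=2$, which the paper uses implicitly since its Proposition~\ref{prop:chow} treats only odd quadrics; this is a worthwhile clarification but not a different argument.
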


\begin{proof}
By $s_t(\cE)c_{-t}(\cE)$=1, $s_1(\cE)=c_1(\cE)$, $s_2(\cE)=c_1^2(\cE)-c_2(\cE)$, $s_3(\cE)=c_1(\cE)(c_1^2(\cE)-2c_2(\cE))$, $s_4(\cE)=c_1^4(\cE)-3c_1^2(\cE)c_2(\cE)+c_2^2(\cE)$ and $s_5(\cE)=c_1(\cE)(c_1^2(\cE)-3c_2(\cE))(c_1^2(\cE)-c_2(\cE))$, we have $s_6=2c_1^6-10c_1^4c_2+12c_1^2c_2^2-2c_2^3$ on $Q^6$.
\end{proof}

Finally, we show the relation between normalized weak Fano bundles of rank $2$ and nef vector bundles.
\begin{lemma}\label{lem:n-c1-nef}
Let $\cE$ be a normalized weak Fano bundle of rank $2$ on $Q^n$. Then $\cE\left(\frac{n-c_1(\cE)}{2}\right)$ is nef.
\end{lemma}

\begin{proof}
Let $\cE_0:= \cE\left(\frac{n-c_1(\cE)}{2}\right)$. We show that $\xi_{\mathbb{P}(\cE_0)}$ is nef. Since 
\begin{align*}
-K_{\mathbb{P}(\cE)}&=-K_{\mathbb{P}\left(\cE_0\right)}\\
&= 2\xi_{\cE_0}+\left(n-\left(c_1(\cE)+2\cdot \frac{n-c_1(\cE)}{2}\right)\right)H\\
&= 2\xi_{\cE_0}
\end{align*}
and $\cE$ is a weak Fano bundle, $\xi_{\P\left(\cE_0\right)}$ is nef.
\end{proof}

\section{Globally generated property}

In this section, we will show the following proposition, which is a generalization of \cite[Proposition~2.7]{APW94}. 
\begin{proposition}\label{prop:gbgs}
Let $\cE$ be a normalized weak Fano bundle of rank $2$ on $Q^n$ where $n\ge 5$. Then $\cE(n-1)$ is globally generated.
\end{proposition}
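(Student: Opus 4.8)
The plan is to prove that $\cE(n-1)$ is $0$-regular with respect to the very ample line bundle $B=\cO_{Q^n}(1)$ and then invoke Castelnuovo--Mumford regularity (Theorem~\ref{thm:mreg}(1) with $m=k=0$). Since $0$-regularity of $\cE(n-1)$ means exactly that $H^i(Q^n,\cE(n-1-i))=0$ for all $i\ge 1$, and since $H^i$ vanishes automatically for $i>n$, the whole argument reduces to the $n$ vanishings with $1\le i\le n$. Write $\pi\colon\P(\cE)\to Q^n$ for the projection and $\xi_{\cE}$ for the tautological divisor; as $R^{>0}\pi_*\cO_{\P(\cE)}(1)=0$ and $\pi_*\cO_{\P(\cE)}(1)=\cE$, the Leray spectral sequence gives $H^i(Q^n,\cE(t))\cong H^i(\P(\cE),\xi_{\cE}+t\,\pi^*H)$ for every $t$. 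Setting $\cE_0:=\cE(\tfrac{n-c_1}{2})$, Lemma~\ref{lem:n-c1-nef} says $\xi_{\cE_0}=\xi_{\cE}+\tfrac{n-c_1}{2}\pi^*H$ is nef, and since $-K_{\P(\cE)}=2\xi_{\cE_0}$ is nef and big, the class $\xi_{\cE_0}$ is itself nef and big.

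For the single index $i=1$ I would use Kawamata--Viehweg vanishing on $\P(\cE)$. Using $K_{\P(\cE)}=-2\xi_{\cE}+(c_1-n)\pi^*H$ one checks
\[
\xi_{\cE}+(n-2)\pi^*H=\cL+K_{\P(\cE)},\qquad \cL:=3\xi_{\cE}+(2n-2-c_1)\pi^*H.
\]
Numerically $\cL=3\xi_{\cE_0}+\tfrac{n+c_1-4}{2}\pi^*H$, and since $n\ge 5$, $c_1\in\{0,-1\}$ force $\tfrac{n+c_1-4}{2}\ge 0$, the class $\cL$ is a sum of the nef and big class $3\xi_{\cE_0}$ and a nonnegative multiple of the nef class $\pi^*H$, hence nef and big. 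Theorem~\ref{thm:KV-van} then gives $H^1(\P(\cE),\cL+K_{\P(\cE)})=0$, i.e. $H^1(Q^n,\cE(n-2))=0$.

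For the range $i\ge 2$ I would instead use Le Potier vanishing (Theorem~\ref{thm:le}), which for a rank $2$ bundle needs only $i\ge 2$ but requires genuine ampleness. Adding nonnegative multiples of the nef class $\pi^*H$ to $\xi_{\cE_0}$ shows $\cE(a)$ is nef for every integer $a\ge a_0:=\lceil\tfrac{n-c_1}{2}\rceil$; twisting a nef bundle by the ample $H$ keeps it ample (a standard property of nef bundles, see~\cite{L042}), so $\cE(a)$ is ample for $a\ge a_0+1$. For each $i$ with $2\le i\le n$ the bundle $\cF_i:=\cE(2n-1-i)$ has twist $2n-1-i\ge n-1\ge a_0+1$, the last inequality being $a_0\le n-2$, which holds for $n\ge 5$; hence $\cF_i$ is ample. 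Since $\cF_i\otimes K_{Q^n}=\cE(n-1-i)$, Theorem~\ref{thm:le} yields $H^i(Q^n,\cE(n-1-i))=0$ for all $i\ge 2$. Together with the case $i=1$ this establishes $0$-regularity of $\cE(n-1)$, and the proposition follows.

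The crux, and the point where this departs from the Fano case of \cite[Proposition~2.7]{APW94}, is precisely the range $i\ge 2$: in the Fano setting $\xi_{\cE_0}$ is ample, so Le Potier applies to a twist of $\cE_0$ directly, whereas the weak Fano hypothesis only provides nefness and bigness, forcing the extra $H$-twist to recover ampleness. The delicate part is therefore the numerical bookkeeping ensuring $a_0\le n-2$, equivalently that the twists $2n-1-i$ remain in the ample range up to $i=n$; this is tight at $n=5,6$ and is exactly where the hypothesis $n\ge 5$ enters. A minor additional care is the parity of $\tfrac{n-c_1}{2}$, which is why I work with the integral threshold $a_0=\lceil\tfrac{n-c_1}{2}\rceil$ rather than treating $\cE_0$ as an honest bundle.
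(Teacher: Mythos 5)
Your proof is correct, and its skeleton is the same as the paper's: establish $H^i(Q^n,\cE(n-1-i))=0$ for $1\le i\le n$ via Le~Potier vanishing (Theorem~\ref{thm:le}) for $i\ge 2$ and Kawamata--Viehweg (Theorem~\ref{thm:KV-van}) for $i=1$ (your decomposition $\xi_{\cE}+(n-2)H=\cL+K_{\P(\cE)}$ with $\cL=\tfrac{3}{2}(-K_{\P(\cE)})+\tfrac{n+c_1-4}{2}H$ is literally the paper's computation, and the inequality $n+c_1\ge 4$ is where $n\ge 5$ enters in both), then conclude by Castelnuovo--Mumford regularity (Theorem~\ref{thm:mreg}). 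The genuine difference is the ampleness step feeding Le~Potier. The paper proves its Claim that $\cE(k+1)$ (resp.\ $\cE(k+2)$, $\cE(k+1)$ in the odd cases) is ample geometrically: since $\P(\cE)$ is weak Fano, $\cNE(\P(\cE))$ is spanned by two rational curve classes by \cite[Theorem~1.4(2)]{Y12}, and one checks $2\xi=-K_{\P(\cE)}+(c_1+2)H$ is positive on both generators, so Kleiman's criterion applies; this forces the threefold case split by parity of $n$ and value of $c_1$. You instead deduce ampleness formally: $\cE_0$ is nef by Lemma~\ref{lem:n-c1-nef}, and a nef bundle twisted by the ample $\cO_{Q^n}(1)$ is ample (the standard $\Q$-twist fact in \cite{L042}), giving $\cE(a)$ ample for $a\ge a_0+1$ with $a_0=\lceil\tfrac{n-c_1}{2}\rceil$. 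This buys a uniform treatment of all parity/$c_1$ cases and avoids any appeal to the Mori cone structure of $\P(\cE)$, at the cost of a threshold that is one worse than the paper's when $n$ is even and $c_1=-1$ ($a_0+1=k+2$ versus the paper's $k+1$) --- which is harmless, since your bookkeeping $2n-1-i\ge n-1\ge a_0+1$ holds for all $n\ge 5$, tight exactly at $n=5,6$ as you note. Your explicit handling of the half-integer twist via $a_0$ is also a point where you are more careful than the paper, which manipulates $\cE\bigl(\tfrac{n-c_1}{2}\bigr)$ as a $\Q$-twist without comment.
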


\begin{proof}
For $X=\P(\cE)$, we denote a natural projection $X\rightarrow Q^n$ by $\pi$ and set $H=\pi^*(\cO_{Q^n}(1))$. We consider by dividing into three cases.

Assume that $n$ is even. For $n=2k$ $(k\ge 3)$, we have
\begin{align*}
-K_X&=2\xi_{\cE}+(2k-c_1(\cE))H\\
&=2\xi_{\cE(k+1)}-(c_1(\cE)+2)H.
\end{align*}

\begin{claim}
$\cE(k+1)$ is ample.
\end{claim}

\begin{proof}
Since $X$ is a weak Fano variety, there exist rational curves $C_1$ and $C_2$ such that $\cNE(X)=\R_{\ge0}[C_1]+\R_{\ge0}[C_2]$ by \cite[Theorem~1.4(2)]{Y12}. Let $F$ be a fiber of $\pi$ and $L$ a line contained  in $F$. Then
\begin{align*}
(-K_X+(c_1(\cE)+2)H)\cdot L&=2\xi_{\cE(k+1)}\cdot L\\
&=2 > 0.
\end{align*}
Let $C$ be a rational curve which is not contracted by $\pi$. Then
\begin{align*}
(-K_X+(c_1(\cE)+2)H)\cdot C&=-K_X\cdot C+(c_1(\cE)+2)H\cdot C\\
&\ge(c_1(\cE)+2)H\cdot C\\
&=(c_1(\cE)+2)\cO_{Q^n}(1)\cdot \pi_*C\\
&>0.
\end{align*}
Therefore by Kleiman's criterion, $2\xi_{\cE(k+1)}$ is ample. Hence $\xi_{\cE(k+1)}$ is ample.
\end{proof}
For $n \ge i\ge 2$, we put $j=3k-i-2$. Then we have 
$$
j=3k-i-2\ge 3k-n-2=k-2\ge 0.
$$
By Theorem~\ref{thm:le}, we obtain 
$$
H^i(Q^n, \cE(j-k+1))=H^i(Q^n, \cE(k+1+j)\tensor K_{Q^n})=0.
$$
Thus
$$
H^i(Q^n, \cE(n-1-i))=0
$$
for $i\ge 2$.  
We show that this holds for \(i=1\). Since \(n-2=2k-2\geq k+1\),
the ampleness of \(\mathcal{E}(k+1)\) implies that
\(\mathcal{E}(n-2)\) is ample. Hence \(\xi_{\mathcal{E}(n-2)}\) is ample on
\(X=\mathbb{P}(\mathcal{E})\). Put
$$
L:=\xi_{\mathcal{E}(n-2)}-K_X.
$$
Then \(L=\xi_{\mathcal{E}(n-2)}+(-K_X)\) is nef and big, since
\(\xi_{\mathcal{E}(n-2)}\) is ample and \(-K_X\) is nef and big.
Therefore, by Theorem 2.6, we obtain
$$
H^1(Q^n,\mathcal{E}(n-2))
=
H^1(X,\xi_{\mathcal{E}(n-2)})
=
H^1(X,K_X+L)
=0.
$$

Therefore 
$$
H^i(Q^n, \cE(n-1-i))=0
$$
for $i\ge 1$. Namely, $\cE$ is $(n-1)$-regular with respect to $\cO_{Q^n}(1)$. By Theorem~\ref{thm:mreg} (i), $\cE(n-1)$ is globally generated.

Assume that $n$ is odd and $c_1(\cE)=-1$. For  $n=2k+1$ $(k\ge2)$, we have $-K_X=2\xi_{\cE}+(2k+2)H=2\xi_{\cE(k+2)}-2H$. Then $\cE(k+2)$ is ample by  the similar   argument of Claim~3.2. 
 
We show that this holds for \(i=1\). In this case, we have
$$
-K_X=2\xi_{\mathcal{E}}+(2k+2)H=2\xi_{\mathcal{E}(k+1)}.
$$
Since \(X\) is weak Fano, \(-K_X\) is nef and big. Hence
\(\xi_{\mathcal{E}(k+1)}\) is nef and big. Moreover,
$$
\xi_{\mathcal{E}(n-2)}
=
\xi_{\mathcal{E}(k+1)}+(k-2)H.
$$
Put
$$
L:=\xi_{\mathcal{E}(n-2)}-K_X.
$$
Then
$$
L
=
3\xi_{\mathcal{E}(k+1)}+(k-2)H.
$$
Since \(k\geq 2\), the divisor \(L\) is nef and big. Therefore, by
Theorem 2.6, we obtain
$$
H^1(Q^n,\mathcal{E}(n-2))
=
H^1(X,\xi_{\mathcal{E}(n-2)})
=
H^1(X,K_X+L)
=0.
$$

Therefore 
$$
H^i(Q^n, \cE(n-1-i))=0
$$
for $i\ge 1$. Namely, $\cE$ is $(n-1)$-regular with respect to $\cO_{Q^n}(1)$. By Theorem~\ref{thm:mreg} (i), $\cE(n-1)$ is globally generated. 

Assume that $n$ is odd and $c_1(\cE)=0$. For $n=2k+1$ $(k\ge2)$, we have $-K_X=2\xi_{\cE}+(2k+1)H=2\xi_{\cE(k+1)}-H$. Then $\cE(k+1)$ is ample by  the similar  argument of Claim~3.2. 
 
We show that this holds for \(i=1\). Since \(n-2=2k-1\geq k+1\),
the ampleness of \(\mathcal{E}(k+1)\) implies that
\(\mathcal{E}(n-2)\) is ample. Hence \(\xi_{\mathcal{E}(n-2)}\) is ample on
\(X=\mathbb{P}(\mathcal{E})\). Put
\[
L:=\xi_{\mathcal{E}(n-2)}-K_X.
\]
Then \(L=\xi_{\mathcal{E}(n-2)}+(-K_X)\) is nef and big, since
\(\xi_{\mathcal{E}(n-2)}\) is ample and \(-K_X\) is nef and big.
Therefore, by Theorem 2.6, we obtain
\[
H^1(Q^n,\mathcal{E}(n-2))
=
H^1(X,\xi_{\mathcal{E}(n-2)})
=
H^1(X,K_X+L)
=0.
\]

Therefore 
$$
H^i(Q^n, \cE(n-1-i))=0
$$
for $i\ge 1$. Namely, $\cE$ is $(n-1)$-regular with respect to $\cO_{Q^n}(1)$. By Theorem~\ref{thm:mreg} (i), $\cE(n-1)$ is globally generated. 
\end{proof}

\section{Weak Fano bundle of rank $2$ over $Q^n$ of dimension $n\ge 5$}

\subsection{The case $c_1(\cE)^2\ge 4c_2(\cE)$}
\begin{proposition}\label{prop:unsta-sp}
Let $\cE$ be a weak Fano bundle of rank $2$ over $Q^n$ where $n\ge 5$.  If $c_1(\cE)^2\ge 4c_2(\cE)$, then $\cE$ splits.
\end{proposition}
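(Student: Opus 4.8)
The plan is to reduce the condition $c_1(\cE)^2 \ge 4 c_2(\cE)$ to a statement about semistability and then invoke the splitting criterion of Proposition~\ref{lem:unsta-chern}. First I would normalize: replacing $\cE$ by a twist $\cE(t)$ changes neither the weak Fano property (the projectivization is unchanged) nor the sign of the discriminant $c_1^2 - 4c_2$, which is twist-invariant. So I may assume $\cE$ is normalized, i.e. $c_1(\cE) \in \{0, -1\}$. By the contrapositive of Proposition~\ref{prop:bogo}, the inequality $c_1(\cE)^2 \ge 4 c_2(\cE)$ forces $\cE$ to be \emph{not} stable, and in the even $c_1$ case $c_1(\cE)^2 > 4 c_2(\cE)$ would force non-semistability; the boundary case $c_1(\cE)^2 = 4 c_2(\cE)$ with $c_1$ even must be treated separately. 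Thus $\cE$ is unstable (or semistable with equality), which is precisely the hypothesis type needed to apply the splitting criterion.

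Second I would arrange to apply Proposition~\ref{lem:unsta-chern}, which requires a \emph{globally generated} rank~$2$ bundle on $Q^n$ ($n \ge 5$) that is unstable and satisfies the Chern-number bound $c_2 \le (n-2)(c_1 - n + 2) + n - 3$. The global generation is supplied by Proposition~\ref{prop:gbgs}: for the normalized $\cE$, the twist $\cE(n-1)$ is globally generated. The subtlety is that twisting shifts the Chern classes, so I must recompute $c_1$ and $c_2$ for $\cF := \cE(n-1)$ and verify both that $\cF$ is still unstable (which holds, since (semi)stability is twist-invariant) and that $\cF$ satisfies the numerical bound of Proposition~\ref{lem:unsta-chern}. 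This is where the main work lies.

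The main obstacle will be the numerical bookkeeping: translating the discriminant inequality $c_1(\cE)^2 \ge 4 c_2(\cE)$ for the normalized bundle into the required inequality $c_2(\cF) \le (n-2)(c_1(\cF) - n + 2) + n - 3$ for the globally generated twist $\cF = \cE(n-1)$. Under the twist by $n-1$ on a rank~$2$ bundle one has $c_1(\cF) = c_1(\cE) + 2(n-1)$ and $c_2(\cF) = c_2(\cE) + (n-1)c_1(\cE) + (n-1)^2$. I would substitute these, use $c_1(\cE) \le 0$ and the bound $c_2(\cE) \le \tfrac14 c_1(\cE)^2$ coming from the hypothesis, and check that the resulting inequality holds for all $n \ge 5$ in each of the normalization cases $c_1(\cE) = 0$ and $c_1(\cE) = -1$. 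The computation should close with room to spare, since the right-hand side of the criterion grows quadratically in $n$ while the hypothesis keeps $c_2(\cE)$ small.

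Once global generation, instability, and the Chern bound are all in place for $\cF$, Proposition~\ref{lem:unsta-chern} gives that $\cF$ splits, and hence so does $\cE = \cF(-(n-1))$ since a twist of a split bundle splits. It remains only to dispose of the borderline semistable case $c_1(\cE)^2 = 4 c_2(\cE)$ with $c_1(\cE) = 0$ (so $c_2(\cE) = 0$): here I would argue directly that a normalized semistable rank~$2$ bundle with $c_2 = 0$ on $Q^n$ must be trivial, for instance via Proposition~\ref{prop:normsta}(2) and a restriction-to-a-line or direct vanishing argument, again yielding a split (indeed trivial) bundle.
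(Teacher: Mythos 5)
Your proposal follows essentially the same route as the paper: normalize, use Proposition~\ref{prop:bogo} to get instability, apply Proposition~\ref{prop:gbgs} to make $\cF=\cE(n-1)$ globally generated, verify the Chern-number bound of Proposition~\ref{lem:unsta-chern} (your reduction is correct, and the inequality indeed closes with room to spare since it amounts to $c_2(\cE)+c_1(\cE)\le n-4$ with $c_1(\cE)\in\{0,-1\}$ and $c_2(\cE)\le 0$), and conclude splitting. Your separate treatment of the borderline case $c_1(\cE)^2=4c_2(\cE)$ is unnecessary, and the paper does not make it: the \emph{stable} half of Proposition~\ref{prop:bogo} gives the strict inequality $c_1^2<4c_2$, so $c_1^2\ge 4c_2$ already implies $\cE$ is not stable, which is all that ``unstable'' means in the hypothesis of Proposition~\ref{lem:unsta-chern}.
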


\begin{proof}
Assume that $\cE$ is normalized. Since $c_1(\cE)^2\ge 4c_2(\cE)$, $\cE$ is unstable by Proposition~\ref{prop:bogo}. By Proposition~\ref{prop:gbgs}, $\cF:=\cE(n-1)$ is globally generated. Then we obtain
\begin{align*}
c_2(\cF)&=c_2(\cE)+(n-1)c_1(\cE)+(n-1)^2\\
&< (n-1)c_1(\cE)+(n-1)^2+1\\
&=(n-1)(c_1(\cF)-2(n-1))+(n-1)^2+1\\
&=(n-1)c_1(\cF)-(n-1)^2+1\\
&=(n-1)c_1(\cF)-n^2+2n+c_1(\cF)-c_1(\cF)\\
&=(n-2)c_1(\cF)-n^2+2n+2(n-1)+c_1(\cE)\\
&=(n-2)(c_1(\cF)-n+2)+c_1(\cE)+2+(n-3)-(n-3)\\
&= (n-2)(c_1(\cF)-n+2)+n-3 -n+5+c_1(\cE)\\
&\le (n-2)(c_1(\cF)-n+2)+n-3 +c_1(\cE)\\
&\le (n-2)(c_1(\cF)-n+2)+n-3.
\end{align*}
By Proposition~\ref{lem:unsta-chern}, $\cF$ splits. Therefore $\cE$ splits.
\end{proof}

\subsection{The case $c_1(\cE)^2< 4c_2(\cE)$}

\begin{proposition}\label{prop:nexist}
There are no rank $2$ weak Fano bundles $\cE$ with $c_1(\cE)^2< 4c_2(\cE)$ over $Q^n$ where $n\ge 12$.
\end{proposition}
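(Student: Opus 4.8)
The plan is to show that the hypotheses $c_1(\cE)^2 < 4c_2(\cE)$ together with the weak Fano condition force $c_2(\cE)$ into a bounded range, and then to contradict the lower bound on $c_2$ supplied by Proposition~\ref{lem:12sin}. First I would normalize $\cE$ so that $c_1 \in \{0,-1\}$; this is harmless since twisting by a line bundle changes neither splitting type nor the weak Fano property, and it only shifts the Chern class inequalities in a controlled way. With $\cE$ normalized, the stability dichotomy of Proposition~\ref{prop:normsta} applies: the condition $c_1^2 < 4c_2$ is compatible with $\cE$ being stable (indeed Proposition~\ref{prop:bogo} shows stability is essentially forced in the strict-inequality regime for $n \ge 12$, since an unstable bundle would split and a split bundle has $c_1^2 \ge 4c_2$).

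The crux of the argument is to produce a section of $\cE$ (after a suitable twist) whose zero locus is smooth of pure codimension two, so that Proposition~\ref{lem:12sin} becomes applicable. For this I would use Proposition~\ref{prop:gbgs}: the twist $\cE(n-1)$ is globally generated, hence a general global section of $\cE(n-1)$ has a zero locus that is smooth and of pure codimension two by a Bertini-type argument. Applying Proposition~\ref{lem:12sin} to the bundle $\cF := \cE(n-1)$ — whose Chern classes still satisfy $c_1(\cF)^2 < 4c_2(\cF)$ because this inequality is invariant under twisting (the Bogomolov discriminant $c_1^2 - 4c_2$ is twist-invariant for rank $2$) — yields the lower bound
$$
c_2(\cF) > \frac{71}{4\sin^2\!\left(\frac{\pi}{n-1}\right)}.
$$

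The next step is to derive an \emph{upper} bound on $c_2(\cF)$ from the weak Fano hypothesis and the global generation, and show the two bounds are incompatible for $n \ge 12$. Here I would combine the global generation of $\cF$ with the numerical constraints coming from $-K_{\P(\cE)}$ being nef and big; concretely, $\xi_{\cE}$ together with $H$ must satisfy nefness on the two extremal rays of $\cNE(\P(\cE))$, which translates into explicit inequalities among $c_1, c_2$ and $n$. The normalization keeps $c_1$ bounded, so these nefness inequalities bound $c_2(\cE)$ from above by a polynomial expression in $n$; converting back via $c_2(\cF) = c_2(\cE) + (n-1)c_1(\cE) + (n-1)^2$ gives an upper bound for $c_2(\cF)$ that grows like a fixed polynomial in $n$. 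The contradiction then comes from comparing this polynomial growth against the lower bound above: since $\sin^2\!\left(\frac{\pi}{n-1}\right) \approx \frac{\pi^2}{(n-1)^2}$ for large $n$, the right-hand side of Proposition~\ref{lem:12sin} grows like $\frac{71(n-1)^2}{4\pi^2}$, and one checks the constant $\frac{71}{4\pi^2}$ against the leading coefficient of the upper bound.

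The main obstacle I anticipate is making the upper bound on $c_2$ sharp enough to beat the lower bound on the whole range $n \ge 12$ rather than only asymptotically. The inequality from Proposition~\ref{lem:12sin} is genuinely quadratic in $n$, so a crude upper bound on $c_2(\cE)$ will not suffice; I would need to extract the nefness constraints on $\xi_{\cE}$ carefully — likely by evaluating $-K_{\P(\cE)}$ and appropriate positive multiples of $\xi_{\cE}$ against both a line in a fiber and a minimal section-type curve — and possibly feed the result through the Riemann--Roch/Segre-class computations in the spirit of Lemma~\ref{lem:5RR} and Lemma~\ref{lem:segre} to control $c_2$ precisely. The delicate point is the transition region near $n = 12$, where the asymptotic comparison is tightest; there I expect to verify the incompatibility by direct numerical estimate, using that $\sin^2(\pi/(n-1))$ is decreasing in $n$ so the lower bound is weakest exactly at $n = 12$, which makes that the critical case to rule out.
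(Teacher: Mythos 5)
Your skeleton coincides with the paper's proof: normalize $\cE$, set $\cF:=\cE(n-1)$, use Proposition~\ref{prop:gbgs} together with a Bertini/Hartshorne-type argument to produce a section of $\cF$ with smooth zero locus of pure codimension two, apply Proposition~\ref{lem:12sin} (legitimately, since the discriminant $c_1^2-4c_2$ is twist-invariant) to get the lower bound on $c_2(\cF)$, and contradict an upper bound for $n\ge 12$. The genuine gap is that you never establish the upper bound, and the mechanism you lead with cannot produce it: evaluating $-K_{\P(\cE)}$ against the two extremal rays of $\cNE(\P(\cE))$ only recovers the qualitative statement that the centered twist of $\cE$ is nef --- this is exactly Lemma~\ref{lem:n-c1-nef} --- and yields no inequality relating $c_1$ and $c_2$. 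The paper closes this in one line by quoting \cite[Proposition~1.4(A)(3)]{APW94}: a globally generated rank-$2$ bundle on $Q^n$ satisfies $c_1^2\ge 3c_2$, so $c_2(\cF)\le \frac{4}{3}(n-1)^2$ since $c_1(\cF)=c_1(\cE)+2(n-1)\le 2(n-1)$; the comparison then reduces, with $m=n-1$, to $m\sin(\pi/m)<\frac{\sqrt{213}}{4}$ for $m\ge 11$. Your fallback remark about Segre-class positivity of the nef tautological class does point at the mechanism underlying the APW94 inequality, so the gap is fillable, but as written the decisive constant (the $3$ in $c_1^2\ge 3c_2$, versus the $71/(4\pi^2)\approx 1.80$ coming from Proposition~\ref{lem:12sin}) is simply not established, and the whole proof hinges on it.

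Two smaller points. First, your stability digression is both unnecessary and unsupported: Proposition~\ref{lem:12sin} needs only the section with smooth codimension-two zero locus, which global generation supplies, and ``an unstable bundle would split'' does not follow from anything quoted (Proposition~\ref{lem:unsta-chern} requires a Chern-class bound, and Proposition~\ref{prop:bogo} is one-directional); the paper makes no stability argument here. Second, your identification of $n=12$ as the critical case has the monotonicity backwards: $m\sin(\pi/m)$ is increasing in $m$ with limit $\pi$, so the comparison is tightest as $n\to\infty$, and it holds for all $m\ge 11$ simply because $\pi<\frac{\sqrt{213}}{4}\approx 3.65$. (Amusingly, the paper's own proof has the mirror-image slip: its auxiliary function $f(x)=\frac{\sqrt{213}}{4}-\frac{1}{x}\sin(\pi x)$ is increasing, not decreasing, on $\left(0,\frac{1}{11}\right]$, so checking $f\left(\frac{1}{11}\right)>0$ tests the wrong endpoint; the conclusion survives because $\inf f=\frac{\sqrt{213}}{4}-\pi>0$.)
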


\begin{proof}
Assume that $\cE$ is normalized. Set $\cF:=\cE(n-1)$. Then we have $c_1(\cF)=c_1(\cE)+2(n-1)\le 2(n-1)$. By Proposition~\ref{prop:gbgs}, $\cF$ is globally generated. Thus by \cite[Proposition~1.4(A)(3)]{APW94}, we have $c_1(\cF)^2\ge 3c_2(\cF)$. Then $c_2(\cF)\le\frac{4}{3}(n-1)^2$. In addition, $\cF$ admits a section whose zero locus is codimension two by \cite[Remark~1.1.1]{H78} and smooth by the same argument as in \cite[Proposition~1.4(b)]{H78}. Since $c_1(\cF)^2< 4c_2(\cF)$, we have $c_2(\cF)>\displaystyle\frac{71}{4{\rm sin}^2\left(\displaystyle\frac{\pi}{n-1}\right)}$ by Proposition~\ref{lem:12sin}. In order to get a contradiction, we show that  
$$
\frac{4}{3}(n-1)^2 <\frac{71}{4{\rm sin}^2\left(\displaystyle\frac{\pi}{n-1}\right)} \hspace{2.0mm}{\rm for}\hspace{1.0mm} n\ge 12.
$$ 
Write $m:=n-1$. It is replaced by $\displaystyle\frac{4}{3}m^2 <\displaystyle\frac{71}{4{\rm sin}^2\left(\displaystyle\frac{\pi}{m}\right)}$. In other words, it is enough to show that 
$$
\displaystyle\frac{2}{\sqrt{3}}m <\displaystyle\frac{\sqrt{71}}{2{\rm sin}\left(\displaystyle\frac{\pi}{m}\right)} \hspace{2.0mm}{\rm for}\hspace{1.0mm} m\ge 11.
$$
 
Set
\[
f(x):=\frac{\sqrt{213}}{4}-\frac{1}{x}\sin(\pi x)
\quad
\left(0<x\leq \frac{1}{11}\right).
\]
We show that \(f(x)>0\) on this interval. A direct calculation gives
\[
f'(x)
=
\frac{\sin(\pi x)-\pi x\cos(\pi x)}{x^2}.
\]
Put
\[
g(x):=\sin(\pi x)-\pi x\cos(\pi x).
\]
Then
\[
g'(x)=\pi^2x\sin(\pi x)>0
\]
for \(0<x\leq 1/11\), and \(\displaystyle\lim_{x\to +0}g(x)=0\). Hence
\(g(x)>0\) for \(0<x\leq 1/11\). Therefore \(f'(x)>0\), and so
\(f(x)\) is increasing. Moreover,
\[
\lim_{x\to +0}f(x)
=
\frac{\sqrt{213}}{4}-\pi
>0.
\]
Thus \(f(x)>0\) for every \(0<x\leq 1/11\).
In other words, $\displaystyle\frac{4}{3}(n-1)^2 <\displaystyle\frac{71}{4{\rm sin}^2\left(\displaystyle\frac{\pi}{n-1}\right)}$ for $n\ge 12$. This is a contradiction.
\end{proof}

\begin{definition}\label{def:cayley}
The Cayley bundle $\cE$ over $Q^5$ is defined as a stable vector bundle of rank $2$ with Chern classes $(c_1(\cE), c_2(\cE))=(-1,1)$.
\end{definition}

\begin{proposition}\label{prop:5-11}
Let $\cE$ be a weak Fano bundle of rank $2$ over $Q^n$ where $5\le n\le11$. If $c_1(\cE)^2< 4c_2(\cE)$, then $n=5$ and $\cE$ is a Cayley bundle up to a twist. 
\end{proposition}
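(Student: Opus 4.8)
The plan is to handle the remaining range $5 \le n \le 11$ with $c_1(\cE)^2 < 4c_2(\cE)$ by combining the globally generated property from Proposition~\ref{prop:gbgs} with the numerical constraints coming from nefness and weak-Fano-ness, and then to sharpen the analysis in the critical cases. First I would normalize $\cE$ so that $c_1(\cE) \in \{0, -1\}$, and set $\cF := \cE(n-1)$, which is globally generated. As in the proof of Proposition~\ref{prop:nexist}, the bound $c_1(\cF)^2 \ge 3c_2(\cF)$ for globally generated rank-$2$ bundles from \cite[Proposition~1.4(A)(3)]{APW94} gives an explicit upper bound $c_2(\cF) \le \frac{4}{3}(n-1)^2$, hence a bounded range for $c_2(\cE)$ in terms of $n$. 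Combined with the hypothesis $c_1(\cE)^2 < 4c_2(\cE)$ (which forces $c_2(\cE) \ge 1$ in the normalized cases), this confines $(c_1, c_2)$ to finitely many numerically possible pairs for each $n$.

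The heart of the argument is to eliminate all these pairs except $(c_1, c_2) = (-1, 1)$ on $Q^5$. For this I would use the nefness of $\cE\bigl(\frac{n-c_1(\cE)}{2}\bigr)$ from Lemma~\ref{lem:n-c1-nef}, together with the positivity of the appropriate Segre/Chern numbers: a nef rank-$2$ bundle has nonnegative Segre classes, so evaluating $s_i$ against the generators $H_{Q^n}$ and (in odd dimensions) $P_{Q^n}$ via Proposition~\ref{prop:chow} yields inequalities that $c_2(\cE)$ must satisfy. The Riemann--Roch computation of Lemma~\ref{lem:5RR} and the Segre identity of Lemma~\ref{lem:segre} are presumably tailored to squeeze the $n=5$ and $n=6$ cases, where the numerical window is widest; in particular, on $Q^5$ one expects only $(c_1,c_2)=(-1,1)$ and possibly $(0, c_2)$ for small $c_2$ to survive, and the surviving candidate must then be shown to be stable (so that it is a Cayley bundle by Definition~\ref{def:cayley} and Proposition~\ref{prop:normsta}). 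Stability follows from $H^0(Q^5, \cE) = 0$, which I would extract from the cohomological vanishing already established, or from $c_1^2 < 4c_2$ via the Bogomolov-type inequality Proposition~\ref{prop:bogo} ruling out destabilizing sub-line-bundles for the minimal Chern pair.

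The main obstacle I anticipate is the bookkeeping in the intermediate dimensions $6 \le n \le 11$, where Proposition~\ref{lem:12sin} is unavailable (it requires $n \ge 12$) yet the numerical window $c_2(\cF) \le \frac{4}{3}(n-1)^2$ is still fairly large. Here one cannot rely on the clean transcendental inequality used in Proposition~\ref{prop:nexist}; instead each candidate Chern pair must be excluded by hand, presumably by showing that the hypothetical zero locus $Z$ of a section of $\cF$ (which is smooth of pure codimension two by the argument of \cite[Proposition~1.4(b)]{H78}) would violate a constraint — for instance, a double-point or adjunction-type formula on $Z$, or a contradiction with the weak Fano condition forcing $-K_X$ nef and big on $X = \P(\cE)$. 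The delicate point is that the weak (rather than strict) positivity of $-K_X$ permits extremal contractions with exceptional loci, so the cone $\cNE(X) = \R_{\ge 0}[C_1] + \R_{\ge 0}[C_2]$ must be analyzed to ensure the second ray does not produce an additional non-splitting example; confirming that no new bundle beyond the Cayley bundle arises, and that the Cayley bundle on $Q^5$ genuinely satisfies the weak Fano condition, is where the bulk of the case-by-case verification will lie.
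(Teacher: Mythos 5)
Your skeleton matches the paper's in outline (normalize; use nefness of $\cE_0:=\cE\bigl(\frac{n-c_1(\cE)}{2}\bigr)$ plus positivity inequalities to confine $(c_1,c_2)$; use the integrality of $\chi$ from Lemma~\ref{lem:5RR} and the Segre positivity of Lemma~\ref{lem:segre} to kill the small cases), and you correctly guessed the roles of those two lemmas. But two of your steps fail as stated. First, your stability argument runs Bogomolov backwards: Proposition~\ref{prop:bogo} says semistable $\Rightarrow c_1^2\le 4c_2$; the hypothesis $c_1^2<4c_2$ does \emph{not} rule out destabilizing sub-line-bundles, and $H^0(Q^5,\cE)=0$ does not follow from the vanishings established in Proposition~\ref{prop:gbgs}, which concern only $H^i$ with $i\ge 1$. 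The paper instead treats the unstable case head-on: if $\cE$ with $(c_1,c_2)=(-1,1)$ is unstable, then $\cE(n-1)$ is globally generated and satisfies the hypothesis of Proposition~\ref{lem:unsta-chern}, hence $\cE$ splits --- which is incompatible with $c_1^2<4c_2$ since a split rank-$2$ bundle has $c_1^2-4c_2=(a-b)^2\ge 0$. Relatedly, your opening bound via $\cF=\cE(n-1)$ and $c_1(\cF)^2\ge 3c_2(\cF)$ only gives $c_2(\cE)\lesssim\frac{1}{3}(n-1)^2$, leaving dozens of pairs; the paper gets $c_2\le 2$ (resp.\ $\le 3$ for $n=6$) by applying the dimension-dependent inequalities $c_1(\cE_0)^2\ge\alpha c_2(\cE_0)$ of \cite[Proposition~1.4]{APW94}, with $\alpha$ ranging up to $2+\sqrt{3}$, to the nef bundle $\cE_0$. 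Your Segre-positivity remark gestures in this direction but never produces the finite list you would then have to clear.

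The more serious gap is your plan for $6\le n\le 11$: eliminating candidates ``by hand'' via zero-locus/adjunction computations or an analysis of $\cNE(\P(\cE))$ cannot work, because the surviving pair $(c_1,c_2)=(-1,1)$ passes every numerical test in every one of these dimensions --- its restriction to $Q^5$ has integral $\chi$, and the nefness inequalities are satisfied. The exclusion of $n\ge 6$ is genuinely non-numerical and rests on external classification results you do not invoke: for $\cE$ stable, the restriction $\cE|_{Q^5}$ to a linear section $Q^5\subset Q^n$ is stable with $(c_1,c_2)=(-1,1)$, hence is a Cayley bundle by \cite[Main~Theorem(i)]{O90}, and by \cite[Theorem~3.2]{O90} no Cayley bundle extends to $Q^6$; this is what forces $n=5$. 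Without Ottaviani's theorems your argument has no mechanism to rule out, say, a hypothetical stable bundle with these Chern classes on $Q^7$, so the proof cannot be completed along the route you describe.
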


\begin{proof}
Assume that $\cE$ is normalized. Write $\cE_0:=\cE\left(\frac{n-c_1(\cE)}{2}\right)$. By Lemma~\ref{lem:n-c1-nef}, $\cE_0$ is nef. Applying \cite[Proposition~1.4]{APW94}, we have $c_1(\cE_0)^2\ge\alpha c_2(\cE_0)$ with 
$$
\alpha=
\begin{cases}
3&(n=5,6)\\
2+\sqrt{2}&(n=7,8)\\
\frac{5}{2}+\frac{\sqrt{5}}{2}&(n=9,10)\\
2+\sqrt{3}&(n\ge 11).
\end{cases}
$$
Since  $c_1(\cE_0)^2=(c_1(\cE)+(n-c_1(\cE)))^2=n^2$ and
$$
\alpha c_2(\cE_0)=\alpha\left(c_2\left(\cE\right)-\frac{1}{4}c_1(\cE)^2+\frac{1}{4}n^2\right), 
$$
we have $n^2\ge\alpha\left(c_2\left(\cE\right)-\frac{1}{4}c_1(\cE)^2+\frac{1}{4}n^2\right)$.

\begin{claim}
$c_2(\cE)\le 2$, if $n\neq6$ and $c_2(\cE)\le 3$, if $n=6$.
\end{claim}

\begin{proof}
Consider the case where $n=5$ and $c_1(\cE)=0$. Since $3\left(c_2\left(\cE\right)+\displaystyle\frac{25}{4}\right)\le 25$, we have $c_2(\cE)\le \displaystyle\frac{25}{12}$. Therefore $c_2(\cE)\le 2$. Other cases can be calculated similarly.
\end{proof}

Firstly we consider the case of $c_2=3$. 
 
If \(n=6\) and \(c_1=-1\), then
\[
c_1\left(\mathcal{E}\left(\frac{7}{2}\right)\right)=6,
\qquad
c_2\left(\mathcal{E}\left(\frac{7}{2}\right)\right)=\frac{47}{4}.
\]
Thus Lemma 2.11 gives
\[
\begin{aligned}
s_6\left(\mathcal{E}\left(\frac{7}{2}\right)\right)
&=
2c_1^6\left(\mathcal{E}\left(\frac{7}{2}\right)\right)
-10c_1^4\left(\mathcal{E}\left(\frac{7}{2}\right)\right)
c_2\left(\mathcal{E}\left(\frac{7}{2}\right)\right) \\
&\quad
+12c_1^2\left(\mathcal{E}\left(\frac{7}{2}\right)\right)
c_2^2\left(\mathcal{E}\left(\frac{7}{2}\right)\right)
-2c_2^3\left(\mathcal{E}\left(\frac{7}{2}\right)\right) \\
&=
\frac{1}{32}(-82223)
<0.
\end{aligned}
\]

This contradicts for Lemma~\ref{lem:n-c1-nef}. Next we consider the case of $c_2=2$. Take $Q^5 \subset Q^n$. If $c_1=0$,  we have 
\begin{align*}
\chi(\cE|_{Q^5})&=2+\frac{55}{24}(-2)+\frac{5}{24}\cdot 8\\
&= -\frac{11}{2}\notin \Z
\end{align*}
by Lemma~\ref{lem:5RR}. Similarly, if $c_1=-1$, we have
\begin{align*}
\chi(\cE|_{Q^5})&=2 -\frac{894}{360} -\frac{55}{24}-1+6+\frac{5}{24}(1-8+8)+\frac{1}{60}(-1+10-20)\\
&= -\frac{7}{3}\notin \Z
\end{align*}
by Lemma~\ref{lem:5RR}. Next we consider the case of $c_1=0$ and $c_2=1$. We have
\begin{align*}
\chi(\cE|_{Q^5})&=2 -\frac{55}{24}+\frac{5}{12}\\
&= -\frac{13}{6}\notin \Z
\end{align*}
by Lemma~\ref{lem:5RR}. It remains to consider the case $c_1=-1$ and $c_2=1$. If $\cE$ is unstable, $\cE(n-1)$ satisfies the assumption of Proposition~\ref{lem:unsta-chern}. So $\cE$ splits. If $\cE$ is stable, the restriction $\cE|_{Q^5}$ is stable with $c_1=-1$ and $c_2=1$. Therefore by \cite[Main~Theorem(i)]{O90}, $\cE|_{Q^5}$ is a Cayley bundle. By \cite[Theorem~3.2]{O90}, no Cayley bundle extends to $Q^6$. Hence we have $n=5$. 
\end{proof}

\begin{proof}[Proof of Theorem~\ref{thm:main2}]
By Proposition~\ref{prop:unsta-sp}, Proposition~\ref{prop:nexist} and Proposition~\ref{prop:5-11}, the proof is  completed.
\end{proof}

\begin{remark}
For a rank 2 weak Fano bundle on $Q^4$, when $c_1(\mathcal{E}) = 0$, the classification can be done except for a few exceptions using methods similar to those in \cite{APW94}. However, it seems  that when $c_1(\mathcal{E}) = -1$, the situation becomes more complicated.
\end{remark}

\subsection*{Acknowledgement}The author is very grateful to his supervisor, Professor Kiwamu Watanabe, for helpful advice and various discussion for this paper. This work was supported by JST SPRING, Japan Grant Number JPMJSP2170.
\subsection*{Data availability}Data sharing not applicable to this article as no data sets were
generated or analyzed during the current study.
\bibliography{Main.bib}
\end{document}